\documentclass[11 pt]{amsart}

\usepackage{hyperref}
\usepackage{amssymb, amsmath, amsthm, amsfonts}
\usepackage{verbatim}
\usepackage{bbm}
\usepackage{enumerate}
\usepackage{dsfont}
\usepackage{upgreek}
\usepackage[mathscr]{eucal}
\usepackage{tikz}
\usepackage[normalem]{ulem}

\usepackage[backgroundcolor=black!5,linecolor=black]{todonotes}
\usepackage{comment}

\usepackage[explicit,indentafter]{titlesec}
\titleformat{\section}{\centering\normalfont\scshape}{\thesection.}{.5em}{#1}
\titleformat{\subsection}[runin]{\normalfont\itshape}{\textnormal{\thesubsection.}}{.5em}{#1.}
\titleformat{\subsubsection}[runin]{\normalfont\itshape}{\thesubsubsection.}{.5em}{#1.}
\titlespacing{\section}{0em}{1em}{0.5em}
\titlespacing{\subsection}{0em}{.5em}{0.5em}

\usepackage{stackengine}

\usepackage{color}
\definecolor{gray}{gray}{0.5}

\newcommand{\cmt}[1]{}

\newcommand{\vertiii}[1]{{\left\vert\kern-0.25ex\left\vert\kern-0.25ex\left\vert #1 
    \right\vert\kern-0.25ex\right\vert\kern-0.25ex\right\vert}}
    
    \newcommand{\detail}[1]{}

\def\lc{\lesssim}
\def\gc{\gtrsim}

\def\eps{\varepsilon}
\def\bbone{{\mathbbm 1}}

\newcommand{\floor}[1]{\lfloor #1 \rfloor }

\newcommand{\Be}{\begin{equation}}
\newcommand{\Ee}{\end{equation}}

\newcommand{\Bm}{\begin{multline}}
\newcommand{\Em}{\end{multline}}

\def\intslash{\rlap{\kern  .32em $\mspace {.5mu}\backslash$ }\int}
\def\qsl{{\rlap{\kern  .32em $\mspace {.5mu}\backslash$ }\int_{Q_x}}}

\def\F{\mathcal F}

\def\lc{\lesssim}
\def\gc{\gtrsim}

\def\floor#1{{\lfloor #1 \rfloor }}
\def\emph#1{{\it #1 }}

\def\ga{\gamma}

\def\supp{{\mathrm{supp}}}
\def\rad{{\mathrm{rad}}}

\def\inn#1#2{\langle#1,#2\rangle}

\def\ga{\gamma}             
\def\eps{\varepsilon}

\def\ka{\kappa}
             \def\La{\Lambda}

\def\om{\omega}              \def\Om{\Omega}

\def\fA{{\mathfrak {A}}}

\def\fV{{\mathfrak {V}}}

\def\fn{{\mathfrak {n}}}

\def\bbR{{\mathbb {R}}}

\def\bbT{{\mathbb {T}}}

\def\bbZ{{\mathbb {Z}}}

\def\cB{{\mathcal {B}}}

\def\cF{{\mathcal {F}}}

\def\cK{{\mathcal {K}}}

\def\cR{{\mathcal {R}}}
\def\cS{{\mathcal {S}}}
\def\cT{{\mathcal {T}}}
\def\cU{{\mathcal {U}}}

\def\emph#1{{\it #1}}
\def\textbf#1{{\bf #1}}

\def\beq{\begin{equation}}
\def\endeq{\end{equation}}

\def\bs{\begin{split}}
\def\es{\end{split}}

\theoremstyle{plain}

\newtheorem{thm}{Theorem}[section]
\newtheorem{prop}[thm]{Proposition}

\newtheorem{lem}[thm]{Lemma}
\newtheorem{cor}[thm]{Corollary}

\newtheorem*{thm*}{Theorem}
\newtheorem*{conj*}{Conjecture}
\newtheorem*{openproblem*}{Open Problem}

\theoremstyle{remark}

\newtheorem*{remarka}{Remark}

\numberwithin{equation}{section}

\definecolor{ascol}{rgb}{0,0,1.} 
\definecolor{tccol}{rgb}{1.,0,0} 

\begin{document}
\title
{Mollified  disc multipliers}
\author{Anthony Carbery and Andreas Seeger}

\address{Anthony Carbery, School of Mathematics and Maxwell Institute for Mathematical
Sciences, University of Edinburgh, James Clerk Maxwell Building, Peter Guthrie Tait
Rd, Kings Buildings, Edinburgh EH9 3FD, Scotland}

\email{A.Carbery@ed.ac.uk}

\address{Andreas Seeger: Department of Mathematics, University of Wisconsin, 480 Lincoln Drive, Madison, WI, 53706, USA.}
\email{seeger@math.wisc.edu}

\thanks{Research supported in part by NSF grant 2348797.}    

\begin{abstract} We prove a sharp boundedness result on $L^4(\mathbb R^2)$ for mollifications of the disc multiplier.
\end{abstract}


\subjclass[2020]{42B15, 42B20}

 \keywords{Disc multiplier, ball multiplier, mollification, Bochner--Riesz multiplier}

\maketitle 

\section{Introduction} 
Define the Fourier transform of a function $f\in L^1(\bbR^2)$ by 
$\cF f(\xi)\equiv \widehat f(\xi)=\int f(y) e^{- i \inn y\xi} dy.$  For $1\le p\le \infty$  let 
 $M_p\equiv M_p(\bbR^2)$ be the usual Fourier multiplier space with norm 
\[\|m\|_{M_p}= \sup_{\substack{f\in \cS\\\|f\|_p\le 1}} \big\|\cF^{-1}[m\widehat f]\big\|_p. \]  We often use the duality  $\|m\|_{M_p}=\|m\|_{M_{p'}}$. 

Consider the characteristic function $\bbone_D$ of the disc 
$D=\{\xi\in \bbR^2:|\xi|\le 1\}$. A  famous result by C. Fefferman \cite{Fefferman-ball} says that $\bbone_D\in M_p$ if and only if $p=2$.   
Therefore, by an application of the uniform boundedness principle and transference results (\cite[Ch. VII]{stein-weiss}) one can show that $L^p$-convergence for the spherical summation of Fourier series in two dimensions fails for some $f\in L^p(\bbT^2)$ if $p\neq 2$. Naturally one is then led to consider regularizations of the disc multiplier $\bbone_D$.

We consider two such regularizations 
and ask for their precise behavior in $M_p$ for $p\neq 2$. 
First, fix  $\eta\in L^\infty(\bbR)$   with support in $[-1/2,1/2]$ and  $\int \eta=1$,  and   let  $\delta\ll 1/2$
(see Section~\ref{sec:appl} for a further relaxation of the assumptions on $\eta$). We then set $\eta_\delta(u)=\delta^{-1}\eta(\delta^{-1}u)$  and $\omega_\delta =\bbone_{[-1,1]}*\eta_\delta$ and consider the radial extension of $\omega_\delta$ as a multiplier in $\bbR^2$, thus producing 
 the {\it mollified  disc multipliers}  
\begin{equation}
    h_\delta(\xi)=\om_\delta(|\xi|).
\end{equation} 
Note that $h_\delta(\xi)=1$ for $|\xi|\le 1-\delta$ 
and $h_\delta(\xi)=0$  for $|\xi|\ge 1+\delta$, moreover $h_\delta$  is Lipschitz continuous with Lipschitz constant $\lc \delta^{-1} \|\eta\|_\infty$.  One may want to choose for $\eta$ a smooth function with compact support, but there are also interesting rougher choices such as $\eta=\bbone_{[-\frac 12,\frac 12]}$,  in which case we get a trapezoidal (de la Vall\'ee-Poussin type)  profile with 
\[ h_\delta(\xi)= \begin{cases}
1 &\text{ if } |\xi| <1-\frac{\delta}2,
\\- \frac{|\xi|}{\delta} +\frac{2+\delta}{2\delta}&\text{ if } 1-\frac \delta 2\le |\xi|\le 1+\frac \delta 2,
\\0  &\text{ if }|\xi|\ge 1+\frac \delta 2.
\end{cases} 
\]

Secondly,  fix $\alpha\in (0,\frac 12)$ and consider the well-studied 
{\it Bochner--Riesz multiplier} at index $\alpha$,  
\begin{equation}
    m_\alpha(\xi)= (1-|\xi|^2)_+^\alpha.
\end{equation}

Sharp results on both multipliers are known for  $1\le p<4/3$ and its dual range $p>4$. We have   $\|h_\delta\|_{M_p(\bbR^2)} \approx_p \delta^{-\alpha_p}  $ with $\alpha_p=2|\frac 1p-\frac 12|-\frac 12$ (at least for smooth $\eta$) and  
$m_\alpha\in M_p$ for $\alpha>\alpha_p$ 
(see \cite{fefferman69}, \cite{CarlesonSjolin}, \cite{FeffermanBR73}, \cite{CordobaBR79}). 
Regarding the $\alpha=\alpha_p$ endpoint  for $1\le p<4/3$  sharp weak type $(p,p)$ estimates for $m_{\alpha_p}$,  and their strengthening via essentially optimal  sparse domination results  are known, and also optimal  $M_p$ results  for multipliers such as $(1-|\xi|^2)^{\alpha_p}_+ |\log \tfrac{1}{|1-|\xi|^2|}|^\beta$, with 
$\beta>\max\{\frac 1p, 1-\frac 1p\}$. 
See \cite{Christ-rough, Christ-BR87}, \cite{Seeger-Indiana, seeger-BRwt}, \cite{Tao-Indiana1998},  \cite{BeltranRoosSeeger-sparseBR, BRS-OWF}.
For   historical context see also  \cite[\S9.3]{Carbery-LMSsurvey}.

Much less is known  about  the   behavior  of the $M_p$-norms of $h_\delta$ and $m_\alpha$ in the range $4/3\le p\le 4$ as  $\delta\to 0+$, $\alpha\to 0+$.    
Some preliminary results can be obtained from a result of C\'ordoba \cite{CordobaBR79} which we now describe.  
Let $\chi_0\in C^\infty_c (\bbR)$ be a nonnegative bump function supported in $(-1,1)$. 
Then  
\cite{CordobaBR79} provides  
the upper bound in 

\begin{equation} \label{eq:cord} \big\|\chi_0 (2^j (1-|\cdot |^2))\big \|_{M_p(\bbR^2)} \approx j^{|\frac 1p-\frac 12| }. 
\end{equation}
The lower bound in \eqref{eq:cord} also holds,  by  
a variant of Fefferman's  argument for the disc multiplier \cite{Fefferman-ball} combined  with  sharp Minkowski  dimension type lower bounds for the Besicovitch set by  Keich \cite{Keich99}. A detailed  proof of this lower bound, for nonnegative nonzero $\chi_0$ can be found for example in  
\cite[Lemma 2.3]{BeltranCarberyRoncalSeeger}, see also \cite{LaSalle, FernandezLaSalle}.

Assuming in addition $\supp(\chi)\subset (\tfrac 14,1) $ and summing estimates \eqref{eq:cord}, with suitable choices of $\chi$,  yields for $4/3\le p\le 4$ the upper bound in 
\begin{equation} \label{eq:cordsum}
(\log\tfrac 1\delta)^{|\frac 1p-\frac 12|} \lc \|h_\delta \|_{M_p} \lc  (\log \tfrac 1\delta) ^{5|\frac 1p-\frac 12|}.
\end{equation}  
Likewise we get $\|m_\alpha \|_{M_p} \lc \alpha^{-5|\frac 1p-\frac 12|}.$ For the  lower bound in \eqref{eq:cordsum} one observes
that $ u_\delta(\xi):=h_\delta(\xi) -h_\delta((1+\delta)\xi) $ is a multiplier of the form  used   in  \eqref{eq:cord}, and by the dilation  invariance of the multiplier norms, $\|u_\delta\|_{M_p}\le 2 \|h_\delta\|_{M_p}$. 
For the cases $p=4/3$ and $p=4$, an asymptotic analysis yields that the convolution kernel $\cF^{-1}[h_\delta]$ has $L^{4/3}$ norm $\approx (\log\tfrac 1\delta)^{3/4}$. Testing the operator on suitable Schwartz functions then  leads to 
$\|m\|_{M_4}=\|m\|_{M_{4/3}}\gc (\log\tfrac 1\delta)^{3/4}$. Our main result gives further improvements and completely closes the gap in \eqref{eq:cordsum} for the limiting cases $p=4/3$ and $p=4$.

\begin{thm} \label{thm:main} 
 There is a constant $C$ such that  for $0<\delta\le \frac 12$, 
\begin{equation*}    C^{-1}\log \tfrac 1\delta \le \|h_\delta \|_{M_{4}(\bbR^2)} \le C \log\tfrac 1\delta. \end{equation*} 
\end{thm}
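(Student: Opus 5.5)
We have to prove both bounds in Theorem~\ref{thm:main}.

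\emph{Lower bound.} We test $h_\delta$ on a Kakeya/Besicovitch-type configuration, using the exact $L^4$ square function structure instead of the cruder C\'ordoba bound \eqref{eq:cord}. First decompose $h_\delta = \sum_{j\le \log_2(1/\delta)} \psi_j(|\xi|)$. Here $\psi_j$ is essentially $\bbone_D$ smoothed at scale $2^{-j}$ minus the same thing smoothed at scale $2^{-j-1}$, so that $\psi_j$ is a signed bump living in the annulus $||\xi|-1|\approx 2^{-j}$.

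Next choose $f=\sum_\nu f_\nu$, where the $f_\nu$ are modulated wave packets adapted to $\delta^{1/2}\times\delta$ angular sectors near the unit circle. Each packet is spatially concentrated on a $\delta^{-1}\times\delta^{-1/2}$ rectangle, and the rectangles are arranged along a Keich-optimal Besicovitch configuration (as in \cite[Lemma 2.3]{BeltranCarberyRoncalSeeger}).

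For each packet, $T_{h_\delta}f_\nu$ is a translate/spread of $f_\nu$ along the normal direction. The contribution of scale $j$ shifts the packet by distance $\approx 2^{j}$ with amplitude $\approx 1$. The aim is to show that the $L^4$ norm of $\sum_\nu T f_\nu$ picks up a factor $\log\frac1\delta$ compared with $\|f\|_4$. The mechanism: summing over $j$ produces a kernel profile $\sim 1/t$ in the normal variable on $1\le t\le\delta^{-1}$, which has $L^1$-type mass $\log\frac1\delta$. The Besicovitch overlap is used so that $\|f\|_4$ stays small while the tails spread into disjoint regions.

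An alternative cleaner route would be to test on $\bbone_D$-like functions and compute. I would first try the simplest route: duality with $L^{4/3}$, and compute $\|\cF^{-1}[h_\delta]\ast g\|_{4/3}$ for $g$ a smooth bump at scale $1$ modulated. This gives only $(\log\frac1\delta)^{3/4}$. So the Besicovitch mechanism must supply the extra $(\log\frac1\delta)^{1/4}$; I expect a Keich-type estimate gains exactly $\log^{1/4}$ on top of the kernel computation.

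\emph{Upper bound.} Write $h_\delta=\sum_{k\le\log(1/\delta)} a_k(|\xi|)$, with $a_k$ supported where $||\xi|-1|\approx 2^{-k}$ (plus a smooth interior part). Each $a_k$ is smooth at scale $2^{-k}$ except for the mollifier roughness at $k\approx\log\frac1\delta$. Applying \eqref{eq:cord} to each piece and summing gives only $\log^{1+1/4}$, so the proof must avoid losing the C\'ordoba factor $k^{1/4}$ on each piece.

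To avoid this loss, I would run a single C\'ordoba argument across all scales. Decompose each $a_k$ into angular sectors of width $2^{-k/2}$, and use biorthogonality in $L^4$: the sumset overlap $\sum \bbone_{S+S'}$ is bounded by $O(k)$ for scale $k$. Then control $\|\sum_k T_k f\|_4$ by $\sum_k\|T_kf\|_4$ only after absorbing the Nikodym maximal function loss. The key structural point is that the Nikodym maximal operator at eccentricity $2^{k/2}$ costs $\log$ only once when treated via a single multi-scale maximal function; its $L^2$ bound is $O(\log\frac1\delta)$ uniformly over all eccentricities up to $\delta^{-1/2}$ (C\'ordoba's estimate). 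Via the square function this yields $\|\sum_k T_k f\|_4\lesssim \sum_k\|(\sum_\nu|T_{k,\nu}f|^2)^{1/2}\|_4$, with each term of size $O(1)$ after a careful weighted argument. Summing over the $\log\frac1\delta$ scales then gives $O(\log\frac1\delta)$.

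The main obstacle is this upper bound. One must show that the per-scale cost is $O(1)$ rather than $k^{1/4}$. I would aim for a weighted inequality of the form $\int |T_kf|^2 w\lesssim\int|f|^2 \mathcal N w$, with a single maximal operator $\mathcal N$ bounded on $L^2$ with norm $O(\log\frac1\delta)$, used once for all $k$ via Cauchy--Schwarz.
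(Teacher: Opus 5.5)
\textbf{Lower bound.} What you give is a heuristic, not an argument. Nothing in it explains why the Besicovitch gain $(\log\tfrac1\delta)^{1/4}$ should multiply the kernel gain $(\log\tfrac1\delta)^{3/4}$, and the wave-packet picture is inconsistent. A packet whose frequency support has normal width $\delta$ already has spatial extent $\delta^{-1}$ in the normal direction, and $h_\delta$ varies only on scale $\delta$ in that direction, so it does not push such a packet a distance $2^j\le\delta^{-1}$ into disjoint regions with a $1/t$ profile. The Fefferman--Keich construction by itself yields only the $(\log\tfrac1\delta)^{1/4}$ in \eqref{eq:cordsum}.

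In the paper the full logarithm comes from phase coherence for radial data (after Kenig--Tomas), not from Kakeya geometry. One works at $p=4/3$ with $f_\delta(y)=|y|^{-3/2}\cos(|y|-\tfrac\pi4)$ on $\delta^{-1/4}\le|y|\le\delta^{-1/2}$, so $\|f_\delta\|_{4/3}\lc(\log\tfrac1\delta)^{3/4}$. The radial kernel of $\bbone_D$ has the closed form $\cK(r,s)=\frac{r^2}{r^2-s^2}B_4(r)B_2(s)-\frac{s^2}{r^2-s^2}B_2(r)B_4(s)$. For $2\delta^{-1/2}\le|x|\le\delta^{-1}$ the main term is essentially $B_4(|x|)\int B_2(s)g_\delta(s)\,s\,ds$. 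This integral is $\approx\int\cos^2(s-\tfrac\pi4)\,\tfrac{ds}{s}\approx\log\tfrac1\delta$, because the oscillation of $g_\delta$ is matched to that of $B_2$. Integrating $|B_4(|x|)|^{4/3}$ (which is $|x|^{-2}$ on average) over this annulus gives an $L^{4/3}$ norm $\gc(\log\tfrac1\delta)^{1+3/4}$, while the other term is $O(1)$. Writing $\cK_{\om_\delta}$ as an $\eta_\delta$-average of dilates of $\cK_{\om_0}$ then shows that $h_\delta-\bbone_D$ contributes only $O(\log\tfrac1\delta)$ there, and duality passes to $M_4$.

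\textbf{Upper bound.} Your key step is false as stated. By Fefferman's biorthogonality (the sets $S_\nu+S_{\nu'}$ have bounded, not $O(k)$, overlap), $\|T_kf\|_4\lc\|(\sum_\nu|T_{k,\nu}f|^2)^{1/2}\|_4$. So if every such square function were $O(\|f\|_4)$, you would get $\|T_k\|_{L^4\to L^4}=O(1)$, contradicting the lower bound $k^{1/4}$ in \eqref{eq:cord}. The per-scale losses can only be $O(1)$ \emph{on average over} $k$, and proving that averaged statement is the whole difficulty.

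Your weighted fallback does not supply it. A per-scale inequality $\int|T_kf|^2w\lc\int|f|^2\mathcal Nw$ with $\|\mathcal N\|_{L^2\to L^2}\lc\log\tfrac1\delta$ gives $\|T_k\|_{L^4\to L^4}\lc(\log\tfrac1\delta)^{1/2}$, hence only $(\log\tfrac1\delta)^{3/2}$ after summing, with or without Cauchy--Schwarz. You would need the summed form $\sum_k\int|T_kf|^2w\lc\int|f|^2\mathcal Nw$, for which you give no argument.

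The paper's averaged statement is Proposition~\ref{thm:vTjthm}, $(\sum_{j\le N}\|T_jf\|_4^4)^{1/4}\lc N^{1/4}\|f\|_4$, after which H\"older in $j$ gives $N\approx\log\tfrac1\delta$. It is proved without any Nikodym maximal function:
\begin{enumerate}
\item After Fefferman's biorthogonality, the pairs $(\nu,\nu')$ are sorted by angular separation $2^{m-j/2}$ and refined to sectors of width $2^{-j/2-m}$.
\item The $\approx N/2$ values of $m$ produce the factor $N^{1/4}$.
\item For fixed $m$, an $\ell^4_jL^4(\ell^2)$ bound uniform in $m$ follows by complex interpolation of the bilinear map $(f,\{\om_j\})\mapsto\{T^m_{j,\nu,\mu}f\}$. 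The endpoints are an $L^2$ estimate (finite overlap of all pieces over all $j$) and an $L^\infty$ estimate for equally spaced square functions. The $L^4_a$, $a>\tfrac12$, regularity of the $\om_j$ is exactly what this interpolation requires.
\end{enumerate}
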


The upper bounds in Theorem  \ref{thm:main}  follow from a more general result on sums of multipliers featured in \eqref{eq:cord}, where we may  also impose less restrictive regularity assumptions on the $\chi_j$ to obtain the rougher versions of  Theorem \ref{thm:main}.
In what follows we denote by $L^4_a(\bbR)$ the standard $L^4$-based Sobolev (i.e.  Bessel-potential) space on the real line. 

\begin{thm}  \label{thm:Tjthm} Let $a>1/2$  and let $\cU$ be a bounded set of $L^4_a(\bbR)$ functions supported in $(\frac 18, \frac 12)$.
Define the convolution operator $T_j$ acting on $\cS(\bbR^2)$ by 
\[ \widehat{T_jf} (\xi)= \om_j(2^j(1-|\xi|) ) \widehat f(\xi)\] 
with $\om_j\in \cU$. Then there is a constant $C$ (depending only on $\cU$) such that for $n\ge 1$, $N\ge 1$, 
\begin{equation}\label{eq:impr} 
    \Big\|\sum_{n< j\le n+N} T_j \Big \|_{L^4\to L^4}  \le C (n+N)^{1/4} N^{3/4}. 
\end{equation}
\end{thm}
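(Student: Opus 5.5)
We prove \eqref{eq:impr} by C\'ordoba's $L^4$ square function method, applied to all the annuli at once. Write $S=\sum_{n<j\le n+N}T_j$. Since $\|Sf\|_4^2=\|(Sf)^2\|_2$, we expand
\[(Sf)^2=\sum_{j,k}T_jf\,T_kf .\]
Each $T_jf$ has Fourier support in the annulus $A_j=\{1-2^{-j-1}\le|\xi|\le 1-2^{-j-3}\}$. We decompose $A_j$ into $\approx 2^{j/2}$ angular sectors $A_{j,\nu}$ of angular width $2^{-j/2}$, so each sector is essentially a $2^{-j}\times 2^{-j/2}$ rectangle. Correspondingly $T_jf=\sum_\nu T_{j,\nu}f$, using a smooth angular partition of unity.

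The plan has four steps.

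\begin{enumerate}[(i)]
\item \textbf{Localized kernels.} For $a>1/2$ and $\om_j$ in a bounded subset of $L^4_a$, we show that the single-sector operators $T_{j,\nu}$, and the related localized pieces below, are bounded on $L^4$ uniformly in $j,\nu$. After rescaling the rectangle to the unit square, $L^4_a$ with $a>1/2$ gives $L^1$-integrability of the kernel in the radial variable, via a Sobolev embedding/Bernstein argument. This replaces C\'ordoba's smoothness hypothesis.

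\item \textbf{Almost orthogonality of products.} We bound
\[\Big\|\sum_{j,k}\sum_{\nu,\mu}T_{j,\nu}f\,T_{k,\mu}f\Big\|_2\]
using the fact that $A_{j,\nu}+A_{k,\mu}$ has bounded overlap as $(\nu,\mu)$ ranges over pairs of the same angular separation class. For $j=k$ this is C\'ordoba's biorthogonality. For $j\neq k$ we take $j<k$, refine the sectors of $A_j$ to angular width $2^{-k/2}$, and use the analogous planar geometry. The geometric lemma to aim for: the sums $A_{j,\nu}+A_{k,\mu}$ overlap at most $O(1)$ times once one restricts to a fixed dyadic angular separation $2^{-l}$ and a fixed pair $(j,k)$. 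Summing over the separation scale costs a factor of order the number of scales, about $n+N$, and this is where the $(n+N)^{1/4}$ comes from after the square root.

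\item \textbf{Reduction to a square function.} Combining (ii) with Cauchy--Schwarz in $(j,k)$, which costs at most $N$ at the level of $\|(Sf)^2\|_2$, we reduce to an inequality of the form
\[\|Sf\|_4^4\lc (n+N)N^{3}\,\Big\|\Big(\sum_{j}\sum_\nu|T_{j,\nu}f|^2\Big)^{1/2}\Big\|_4^4 .\]
The powers must be arranged to give $(n+N)N^3$; in particular the Cauchy--Schwarz losses must be balanced carefully.

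\item \textbf{The Nikodym-type square function.} It remains to show
\[\Big\|\Big(\sum_{j}\sum_\nu|T_{j,\nu}f|^2\Big)^{1/2}\Big\|_4\lc \|f\|_4\]
with no loss in $N$. We dualize against $g\in L^2$ and use the $L^2$ bound for the Nikodym maximal function over $2^{j/2}\times 2^j$ rectangles, which is $\lc j^{c}$, together with a vector-valued Rubio de Francia/Littlewood--Paley inequality in the radial variable. This exploits the disjointness of the radial supports of the $\om_j(2^j(1-|\cdot|))$.
\end{enumerate}

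The main obstacle is doing step (iv) without losing more than $(n+N)^{1/4}$ in total. The Nikodym maximal function over eccentricity $2^{j/2}$ costs a power of $\log$ of the eccentricity, roughly $j\le n+N$. The bookkeeping must therefore put the whole logarithmic loss either in (ii) or in (iv), not in both. If the loss turns out to accumulate in both places, I would try to apply the maximal function only once, at the coarsest angular scale $2^{-(n+N)/2}$, treating all $j$ simultaneously.
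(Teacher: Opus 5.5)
There is a genuine gap, and it is the bookkeeping problem you flag at the end; rebalancing the steps cannot fix it.

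\textbf{Step (iv) is false.} Given (iii), you need (iv) with a constant independent of $n$ and $N$. Take any single $j_0\in(n,n+N]$.
\begin{itemize}
\item Fefferman's biorthogonality, as in \eqref{eq:Feff}, gives $\|T_{j_0}f\|_4\lc\|(\sum_\nu|T_{j_0,\nu}f|^2)^{1/2}\|_4$.
\item The right-hand side is pointwise dominated by your global square function.
\item But $\|T_{j_0}\|_{L^4\to L^4}\gc j_0^{1/4}$ when $\om_{j_0}$ is a nonnegative bump, by the lower bound in \eqref{eq:cord}.
\end{itemize}
So the square function in (iv) has $L^4$ operator norm $\gc (n+N)^{1/4}$. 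Combining (iii) with any valid version of (iv) therefore gives at best $(n+N)^{1/2}N^{3/4}$. A Nikodym-based proof of (iv) would lose further powers of $j$ on top of that.

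\textbf{The lemma in (ii) also fails for $j<k$.} At angular separation $2^{-l}$, the modulus $|\xi+\eta|$ pins down the separation only to within $2^{l-j}$ (the radial width of $A_j$ divided by the separation). Hence a point lies in about $1+2^{l+k/2-j}$ of the sets $A_{j,\nu}+A_{k,\mu}$. At the smallest separations this is as large as $2^{k-j}$. You do not actually need the cross terms at all, though.

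\textbf{The missing idea} is to avoid both a global square function and a maximal function. The paper proceeds as follows.
\begin{itemize}
\item It proves the $\ell^4$-valued bound $(\sum_{j\le M}\|T_jf\|_4^4)^{1/4}\lc M^{1/4}\|f\|_4$ (Proposition~\ref{thm:vTjthm}). Then \eqref{eq:impr} follows from H\"older in $j$ with $M=n+N$; this is where $N^{3/4}$ comes from, with no cross terms.
\item For each $j$ it applies \eqref{eq:Feff} and splits the pairs $(\nu,\nu')$ into separation classes $m$. This is the only loss: there are about $M$ classes, which gives $M^{1/4}$.
\item It refines the frequency decomposition at scale $2^{-j/2-m}$. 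Within a class, the pieces then organize into families $\fV^\ka_{j,m}$ of essentially parallel, equally spaced $2^{-j}\times 2^{-j/2-m}$ rectangles. One only needs the $\ell^4$ (not $\ell^2$) sum over $\ka$ of the $L^4$ norms of the square functions over each family.
\item For fixed $m$, this estimate \eqref{eq:m-ind} holds uniformly in $j$ by bilinear complex interpolation in $(f,\{\om_j\})$ between two endpoints.
  \begin{itemize}
  \item The $L^2$ endpoint, with $\om_j\in L^\infty$, follows from finite overlap of all pieces over all $j$, using the disjointness of the annuli $A_j$.
  \item The $L^\infty$ endpoint, with $\om_j\in L^2_{1+\eps}$, is a Carleson/Rubio de Francia type bound for square functions over parallel, equally spaced rectangles.
  \end{itemize}
\item The interpolation is also what produces the hypothesis $\om_j\in L^4_a$ with $a>1/2$.
\end{itemize}
The Nikodym maximal function never enters, and that is precisely why the per-$m$ estimate is lossless.
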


For the application of Theorem \ref{thm:Tjthm} to Theorem \ref{thm:main}  we will need to use  the continuous embedding 
$\mathrm{Lip(1)}\hookrightarrow L^4_{\eps+1/2} $  for $\eps<1/4$. When  $N=1$ in Theorem \ref{thm:Tjthm} we  recover  \eqref{eq:cord},  but for larger $N$  we get an improvement over just   using the triangle inequality.  Note that the Fourier transform $\widehat{T_j f}$ is supported in the intersection of a sector with the  thin annulus 
$A_j=\{\xi: 1- 2^{-j-3}\le |\xi|\le 1-2^{-j-1}\}.$
A key idea is to somehow exploit the essential orthogonality of the operators $T_j$ (or related versions of it) in $L^2$, but  it is  not {\it a priori} obvious how to use it for the $L^p$ inequalities.

\begin{remarka} Note an  easy  corresponding version  for multipliers on the real line, namely $\sum_{j>2} \om_j(2^j(1-|\xi|) )$ belongs to $M_p(\bbR)$, for $1<p<\infty$.  
We  consider $m_\pm(\xi) =\sum_{j>2} \omega_j (2^j(1\pm \xi))$ and observe that the multipliers $m_\pm(\xi\mp 1)$ satisfy  the assumptions of the   Marcinkiewicz multiplier theorem.
\end{remarka}

Theorem \ref{thm:Tjthm} implies  the upper bounds in Theorem \ref{thm:main} and also  better  bounds for $\|m_\alpha\|_{M_4}$. After an  interpolation with $L^2$ estimates we get the following improvement over \eqref{eq:cordsum}. The argument for it is sketched in Section~\ref{sec:upperbounds}. 
\begin{cor} \label{cor:interpol}   Let $0<\delta, \alpha<\frac 12$ and    $\tfrac 43\le p\le 4$. Then the following hold.

(i) 
\[ \|h_\delta\|_{M_p(\bbR^2)} \lc (\log \tfrac 1\delta) ^{4|\frac 1p-\frac 12|} ,\quad 
\|m_\alpha \|_{M_p(\bbR^2)} \lc \alpha^{-4|\frac 1p-\frac 12|}.\]

(ii) Define the logarithmic Bochner--Riesz multipliers  by \[ \ga_\beta(\xi)= 
\begin{cases} 
|\log(1-|\xi|^2)|^{-\beta} &\text{ for  $|\xi|< 1$}\\ 
0 &\text{ for  $|\xi|\ge 1$.}
\end{cases} 
\]Then $\ga_\beta\in M_p$ for $\beta>4|\frac 1p-\frac 12|$. 
\end{cor}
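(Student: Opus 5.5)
Both parts come from Theorem~\ref{thm:Tjthm}, interpolated with trivial $L^2$ bounds, after a dyadic decomposition of the multipliers near the unit circle. By duality, $\|m\|_{M_p}=\|m\|_{M_{p'}}$, so it suffices to treat $2\le p\le 4$. Let $\theta\in[0,1]$ be defined by $\frac1p=\frac{1-\theta}2+\frac\theta4$, so that $\theta=4(\frac12-\frac1p)=4|\frac1p-\frac12|$. For a sum $S=\sum_{n<j\le n+N}T_j$ of pieces as in Theorem~\ref{thm:Tjthm}, the supports of the $\omega_j(2^j(1-|\xi|))$ lie in the annuli $A_j$, which overlap only boundedly. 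Hence $\|S\|_{L^2\to L^2}\lc \sup_j\|\omega_j\|_\infty\lc 1$, using $L^4_a\hookrightarrow L^\infty$ for $a>1/4$. Riesz--Thorin applied to this bound and \eqref{eq:impr} gives
\[ \|S\|_{M_p}\lc \big((n+N)^{1/4}N^{3/4}\big)^{\theta}. \]

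For part (i), first take $h_\delta$. Choose a smooth partition of unity in the radial variable. A bump $\chi_{\rm in}$ handles $|\xi|\le 1-c$; it is smooth compactly supported and hence lies in $M_p$. For the layers with $2^{-j}\gg\delta$ on the inner side, $1-|\xi|\approx 2^{-j}$, we use pieces $\psi(2^j(1-|\xi|))$ with $\psi$ supported in $(\frac18,\frac12)$, summed over $1\le j\le \log_2\frac1\delta-C$; on these layers $h_\delta\equiv1$. Here $\omega_j=\psi$ is fixed and smooth, so $\cU$ is bounded, and the above bound with $n=0$ (or $n=1$), $N\approx\log\frac1\delta$ gives $\lc(\log\frac1\delta)^{\theta}$. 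The remaining region $||\xi|-1|\lc\delta$ is handled by a single multiplier $\phi(\delta^{-1}(1-|\xi|))h_\delta(\xi)$. After rescaling, its profile is a compactly supported Lipschitz function of $\delta^{-1}(1-|\xi|)$, and $\mathrm{Lip}\hookrightarrow L^4_{a}$ for $a<3/4$. It can be split into $O(1)$ translates/dilates fitting the $(\frac18,\frac12)$ support hypothesis (with $j\approx\log\frac1\delta$). The piece outside the circle, $1+|\xi|$ side, is treated with the mirrored decomposition; alternatively one notes that, for this $O(1)$ number of pieces, \eqref{eq:cord}-type bounds $\lc j^{\theta/4}$ follow from the $N=1$ case. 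All of these are dominated by $(\log\frac1\delta)^\theta$.

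For $m_\alpha$, write $m_\alpha=\sum_j\psi(2^j(1-|\xi|))(1-|\xi|^2)^\alpha$ plus a smooth part. The $j$-th profile is $\omega_j(s)=\psi(s)\,(2^{-j}s(2-2^{-j}s))^{\alpha}$, which equals $2^{-j\alpha}$ times a uniformly bounded smooth family. Split the sum into blocks $2^k<j\le 2^{k+1}$. The block bound gives $\lc 2^{-2^k\alpha}(2^{k})^{\theta}$, and summing over $k$ gives $\sum_k 2^{k\theta}e^{-c\alpha 2^k}\lc\alpha^{-\theta}$.

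For part (ii), apply the same block decomposition to $\gamma_\beta$. The $j$-th piece has size $\approx j^{-\beta}$ times a bounded smooth family, since $|\log(1-|\xi|^2)|\approx j$ there. The block $j\in(2^k,2^{k+1}]$ gives $\lc 2^{-k\beta}2^{k\theta}$, which is summable exactly when $\beta>\theta=4|\frac1p-\frac12|$.

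The main step to check is that the rescaled profiles in each block form a bounded subset $\cU$ of $L^4_a$ after the scalar factor $2^{-j\alpha}$ or $j^{-\beta}$ is pulled out, and that this factor can be taken as roughly constant across a dyadic block. Within a block the factor varies, but the problem is removed by Abel summation, or more simply by using the sharper uniform bound with $\omega_j$ normalized by $\sup$ over the block, i.e. dividing by $2^{-2^k\alpha}$. The quantity $2^{(2^k-j)\alpha}$ stays $\le 1$, since $j>2^k$, and the normalized profiles remain bounded in $L^4_a$. The transition layer of $h_\delta$ relies on the Lipschitz embedding stated after Theorem~\ref{thm:Tjthm}.
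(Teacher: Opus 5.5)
Your proposal is correct and is essentially the paper's argument. You cut the multiplier near the circle into the pieces $T_j$, apply Theorem~\ref{thm:Tjthm} to blocks of consecutive $j$ after pulling out the (essentially constant) size factor, and interpolate with the trivial $L^2$ bound, blockwise for $\gamma_\beta$ exactly as in the paper. In both arguments the part of $\gamma_\beta$ away from the circle must be read with a cutoff, since $|\log(1-|\xi|^2)|^{-\beta}$ is unbounded near $\xi=0$.

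The differences are minor. For $h_\delta$, the paper first replaces it by the dilate $\omega_\delta((1+2\delta)|\xi|)$, which puts every piece strictly inside the circle; this is the clean justification for your loosely described handling of $|\xi|>1$, since Theorem~\ref{thm:Tjthm} only covers $1-|\xi|>0$. For $m_\alpha$, the paper sums blocks $k/\alpha<j\le (k+1)/\alpha$ at $p=4$ to get $\|m_\alpha\|_{M_4}\lesssim\alpha^{-1}$ and interpolates once, keeping constants uniform in $p$. Your blockwise interpolation instead gives $\sum_k 2^{k\theta}2^{-2^k\alpha}\lesssim_\theta\alpha^{-\theta}$, with a constant that degenerates as $p\to 2$.
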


\begin{remarka}
   The examples showing the lower bounds for the dual $L^{4/3}$ estimate in  Theorem \ref{thm:main}   are radial.  Thus  if $\cT_\delta$ denotes the convolution operator with multiplier $h_\delta$ then
    \Be\label{eq:equiv}\|\cT_\delta\|_{L^{4/3} \to L^{4/3} } \approx \|\cT_{\delta} \|_{L^{4/3}_\rad \to L^{4/3} }\Ee and a similar statement for $p=4$. 
    This mirrors the  equivalence  of the $L^p\to L^p$ and the $L^p_\rad\to L^p_\rad$ operator norms in the previously mentioned 
     results for the range  $1\le p<4/3$. However, in  that range  both operator norms are equivalent with $\|K_\delta\|_p$,  the $L^p$-norm of the  convolution kernel.  The latter equivalence breaks down for the endpoint $p=4/3$, indeed, as noted above,   $\|K_\delta\|_{4/3}\approx (\log \tfrac 1\delta)^{3/4}$ but $\|\cT_\delta\|_{L^p\to L^p} \approx \log \tfrac 1\delta$. The analogue of the equivalence \eqref{eq:equiv} for $4/3<p<4$ also breaks down, as does the analogous statement for the  restricted weak-type category for $p=4/3$ or  $p=4$. This follows by the positive results of 
        Herz \cite{Herz-ball} and Chanillo \cite{Chanillo-ball} for radial functions and the negative results for general functions based on  the Kakeya-type examples (\cite{Fefferman-ball}, \cite{BeltranCarberyRoncalSeeger}).     
\end{remarka}
\subsubsection*{Notation} Above and throughout this paper we write, for  nonnegative  quantities $a,b$,  $a \lesssim b$ or $a\lesssim_L b$
  to indicate $a \leq C b$ for some constant $C$ which may depend  on some list $L$. We write  
  $a \approx  b$ to indicate that both $a \lesssim b$ and $b \lesssim a $ hold. 

\subsubsection*{Declaration of research tools} No AI tools were used in the preparation of this paper. It was  typeset  using  \AmS-\LaTeX.

\section{Lower bounds for mollified ball multipliers} \label{sec:lower-bounds}
Here we prove the lower bounds in Theorem \ref{thm:main}. The arguments here work equally well in higher dimensions $d$ for $\delta$-regularizations  of the ball multiplier and give the  lower bound 
\begin{equation} \|h_\delta\|_{M_{{p_d} }}\ge c \log \tfrac 1\delta, \quad p_d= \tfrac{2d}{d-1} \end{equation}   for some positive constant $c$ and sufficiently small $\delta>0$. 
We will derive such lower  bounds 
by  approximating the ball multiplier with its mollifications.  Specifically we rely on  a paper by Kenig and Tomas \cite{KenigTomas-ball} who proved the failure of the weak type $(p_d,p_d)$ property  for the ball multiplier operator acting on radial functions.

We review a few formulas for radial multiplier transformations acting on radial functions and apply them to the ball multiplier and its mollification. 
 Recall that for a radial function $f(x)= g(|x|)$ the Fourier  transform  is given by 
$\widehat f(\xi)= (2\pi)^{d/2} \cB_g g(|\xi|)$ where
\[\cB_d g(\rho)= \int g(s) B_d(s\rho)\rho^{d-1} d\rho, \, \text{ with } B_d(\varrho)=  \varrho^{-\frac{d-2}{2}} J_{\frac{d-2}{2}}(\varrho).\]
Here we adopt convenient notation in \cite{GarrigosSeeger2008},  noting that $B_d$ corresponds up to a constant to the radial profile of the Fourier transform of surface measure on the unit sphere. 
The Fourier inverse is then given by $\F^{-1} f(\xi)=(2\pi)^{-d/2} \cB_d g(|\xi|)$. Hence, for a radial multiplier $\om(|\cdot|)$ we then have 
\begin{subequations}\label{eq:Fourier-BesseL}
\begin{equation}
    \label{eq:Fourier-Bessel-expr} 
\cF^{-1}[\om(|\cdot|) \widehat f] (x)= \int \cK_\om(r,s) s^{d-1} g(s) ds, \quad \text{ with }r=|x|,
\end{equation} where 
\begin{equation} \label{eq:Fourier-Bessel-kernel} \cK_\om(r,s)= \int_0^\infty \om(\rho) B_d(r\rho) B_d(s\rho) \rho^{d-1} d\rho. 
\end{equation} 
\end{subequations} 
For the ball multiplier we use this for $\om=\bbone_{[0,1]}$ or, equivalently for $\om_0(\rho)= \bbone_{[-1,1]}$. 
Modifying  a definition in  \cite{KenigTomas-ball} 
we  set 
\begin{equation}  g_\delta(s)= \begin{cases} s^{-\frac{d+1}{2}} \cos (s- \tfrac{d-1}{4}\pi), &\text{ if } \delta^{-1/4} \le s\le \delta^{-1/2} ,\
\\
0&\text{ otherwise}. 
\end{cases} 
\end{equation} 
and $ f_\delta(y)=g_\delta(|y|)$. 
Note that
\Be\label{eq:fdelta} \|f_\delta\|_{p_d}\lc (\log \tfrac 1\delta)^{\frac{1}{p_d}}.
\Ee

We will rely on a lower bound for the ball  multiplier operator acting on the $f_\delta$ when evaluated on 
\begin{equation} \label{def:Adelta} \fA(\delta)= \{x: 2\delta^{-1/2}\le |x|\le \delta^{-1} \}.
\end{equation} 

\begin{lem} \label{lem:lower-disc} 
Assume $d\ge 2$, $p_d=\frac{2d}{d+1}$. 
Then there exists $c>0$  and $\delta_\circ>0$ such that for all  $0<\delta\le \delta_\circ $
\Be\label{lowerbddisc}\Big(
\int_{\fA(\delta)} \big|\cF^{-1} [\bbone_B \widehat f_\delta](x)\big|^{p_d} dx\Big)^{1/p_d}\ge c (\log \tfrac 1\delta)^{1+\frac 1{p_d}}.
\Ee
\end{lem}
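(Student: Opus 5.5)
The plan is to compute $\cF^{-1}[\bbone_B\widehat f_\delta]$ on $\fA(\delta)$ explicitly through the Fourier--Bessel formula \eqref{eq:Fourier-Bessel-expr} with $\om=\bbone_{[0,1]}$. I will extract a main term of the form $c\,\log(\tfrac1\delta)\, r^{-\frac{d+1}{2}}\sin(r-\tfrac{d-1}{4}\pi)$ and then integrate its $p_d$-th power over $\fA(\delta)$.

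Throughout I use the exponent as written in the statement, $p_d=\frac{2d}{d+1}$. This is the exponent for which $|g_\delta(s)|^{p_d}s^{d-1}\approx s^{-1}$, consistent with \eqref{eq:fdelta}; it is the dual of the exponent $\frac{2d}{d-1}$ appearing in the $M_{p_d}$ bound above, so the two are related by the duality $\|m\|_{M_p}=\|m\|_{M_{p'}}$.

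\emph{Step 1: the kernel.} With $\nu=\frac{d-2}{2}$ we have $\cK(r,s)=(rs)^{-\nu}\int_0^1 J_\nu(r\rho)J_\nu(s\rho)\rho\,d\rho$. Lommel's integral evaluates this as
\[
\cK(r,s)=(rs)^{-\nu}\,\frac{sJ_\nu(r)J_\nu'(s)-rJ_\nu(s)J_\nu'(r)}{r^2-s^2}.
\]
For $x\in\fA(\delta)$ and $s\in\supp g_\delta$ we have $r\ge 2s$ and $s\ge\delta^{-1/4}$ large. So I insert the asymptotics
\[
J_\nu(t)=\sqrt{2/(\pi t)}\,\cos\bigl(t-\tfrac{d-1}{4}\pi\bigr)+O(t^{-3/2}),\qquad
J_\nu'(t)=-\sqrt{2/(\pi t)}\,\sin\bigl(t-\tfrac{d-1}{4}\pi\bigr)+O(t^{-3/2}).
\]
Write $\varphi=\frac{d-1}{4}\pi$. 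The term $rJ_\nu(s)J_\nu'(r)/(r^2-s^2)$ dominates and gives
\[
\cK(r,s)=c_d\,(rs)^{-\frac{d-1}{2}}\,r^{-1}\cos(s-\varphi)\sin(r-\varphi)+\mathcal E(r,s).
\]
The error satisfies $|\mathcal E(r,s)|\lc (rs)^{-\frac{d-1}{2}}\bigl(r^{-1}s^{-1}+sr^{-2}\bigr)$, using $r^2-s^2\approx r^2$.

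\emph{Step 2: the main term.} Plugging into \eqref{eq:Fourier-Bessel-expr}, the main part equals
\[
c_d\,r^{-\frac{d+1}{2}}\sin(r-\varphi)\int_{\delta^{-1/4}}^{\delta^{-1/2}}\cos^2(s-\varphi)\,s^{-\frac{d-1}{2}}s^{-\frac{d+1}{2}}s^{d-1}\,ds .
\]
The integral is $\int\cos^2(s-\varphi)\,s^{-1}ds=\tfrac18\log\tfrac1\delta+O(1)$. The error contributes at most
\[
r^{-\frac{d-1}{2}}\int s^{-1}\bigl(r^{-1}s^{-1}+sr^{-2}\bigr)\,ds\lc r^{-\frac{d+1}{2}}\bigl(\delta^{1/4}+\delta^{-1/2}r^{-1}\bigr)\lc r^{-\frac{d+1}{2}},
\]
since $r\ge2\delta^{-1/2}$. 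Hence on $\fA(\delta)$,
\[
\bigl|\cF^{-1}[\bbone_B\widehat f_\delta](x)\bigr|\ge c\log(\tfrac1\delta)\, r^{-\frac{d+1}{2}}|\sin(r-\varphi)|-C r^{-\frac{d+1}{2}} .
\]

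\emph{Step 3: integration.} Restrict to the set where $|\sin(r-\varphi)|\ge\frac12$. This set occupies a fixed proportion of each period in $r$, and on it the lower bound is $\gc \log(\tfrac1\delta)\,r^{-\frac{d+1}{2}}$ once $\delta\le\delta_\circ$. Then
\[
\int_{\fA(\delta)}|\cdot|^{p_d}\,dx\gc(\log\tfrac1\delta)^{p_d}\int_{2\delta^{-1/2}}^{\delta^{-1}} r^{-\frac{d+1}{2}p_d+d-1}\,dr=(\log\tfrac1\delta)^{p_d}\int r^{-1}\,dr\approx(\log\tfrac1\delta)^{p_d+1}.
\]
Taking the $p_d$-th root gives \eqref{lowerbddisc}.

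The main obstacle is Step 1: justifying the closed form uniformly and bounding the error $\mathcal E$ with the Bessel remainder terms. This requires care because the cross terms of order $s\cdot r^{-2}$ and $t^{-3/2}$ must be shown integrable against $s^{-\frac{d+1}{2}}s^{d-1}$ without logarithmic loss. The region $r\ge2s$ is exactly what keeps the denominator $r^2-s^2$ harmless. If the closed-form route were awkward, I would instead write $B_d$ via its oscillatory asymptotic expansion, integrate in $\rho$ by parts, and isolate the boundary term at $\rho=1$, which produces the same main term.
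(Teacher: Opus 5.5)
Your proof is correct and is essentially the paper's argument. Lommel's identity gives exactly the closed form \eqref{eq:Kexplicit}, which the paper derives by integrating by parts and using the symmetry $\cK(r,s)=\cK(s,r)$; since $B_{d+2}(r)\approx\sqrt{2/\pi}\,r^{-\frac{d+1}{2}}\sin(r-\tfrac{d-1}{4}\pi)$, your main term is the paper's $\cT_0$ term, and your error bound $r^{-\frac{d+1}{2}}(\delta^{1/4}+\delta^{-1/2}r^{-1})$ matches its bound $\delta^{-1/2}|x|^{-\frac{d+3}{2}}$ for $\cT_1$. The only cosmetic difference is that the paper keeps the factor $\frac{r^2}{r^2-s^2}\in[1,\tfrac43]$ and uses positivity of $\cos^2$, while you replace it by $1$ and absorb the difference into $\mathcal E$.
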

\begin{proof} 
We have to consider $\cK_{\om_0} (r,s) $ for $r\ge 2s$. This is (essentially) in \cite{KenigTomas-ball}.
From  \cite[\S3.21 (5),(6)]{Watson-treatise} we have 
\[ \frac{d}{dz} \big( z^d B_{d+2}(z)) = B_d(z) z^{d-1}, \qquad  \frac{d}{dz} [B_d(z)]=- zB_{d+2}(z)\]
and, following \cite{KenigTomas-ball} an integration by parts yields
\begin{align*}
\cK_{\om_0}(r,s)&=s^{-d} \int_0^1 B_d(r\rho) \frac{d}{d\rho}\big[ (s\rho)^d B_{d+2} (s\rho)  \big]  d\rho\\&=
s^{-d} B_d(r) s^d B_{d+2}(s)-  s^{-d} \int_0^1 \frac{d}{d\rho}\big[ B_d(r\rho)] (s\rho)^d B_{d+2}(s\rho) d\rho\\&= 
B_d(r) B_{d+2} (s) + r^2 a(r,s)
    \end{align*}
    where $a(r,s)=\int_0^1 B_{d+2}(r\rho) B_{d+2}(s\rho) \rho^{d+1} d\rho$.  
    This yields
    \[\frac{ \cK_{\om_0} (r,s)- B_d(r) B_{d+2}(s)} {r^2}= a(r,s)=a(s,r)= \frac{\cK_{\om_0}(s,r) - B_d(s) B_{d+2}(r)}{s^2} \]
and since $\cK_{\om}(r,s)=\cK_{\om} (s,r)$  for any $\om$ we obtain
\Be \label{eq:Kexplicit}
\cK_{\om_0}(r,s)= 
\frac{r^2}{r^2-s^2} B_{d+2} (r) B_d(s) -\frac{s^2}{r^2-s^2} B_d(r) B_{d+2}(s).
\Ee

Hence, for $x\in \fA(\delta)$,  \[ \cF^{-1} [\bbone_B\widehat {f_\delta} ](x) = \cT_0 g_\delta(x)- \cT_1 g_\delta(x) \]
where 
\begin{align*} \cT_0 g_\delta(x)&= B_{d+2}(|x|) \int_0^\infty\frac{|x| ^2}{|x|^2-s^2}   B_d(s) g_\delta(s) s^{d-1} ds, 
\\
\cT_1 g_\delta(x) &= B_d(|x|) \int_0^\infty \frac{s^2}{|x|^2-s^2} B_{d+2}(s) g_\delta(s)  s^{d-1}  ds.
\end{align*} 
Observe that  no singularities occur in these integrals since we assume $|x|\ge 2\delta^{-1/2}$ and $g_\delta$ is supported in $[\delta^{-1/4}, \delta^{-1/2}].$

Using asymptotics for Bessel functions \cite[ch. IV, Lemma 3.11]{stein-weiss} we have for $s\ge 1$
\[B_d(s) = \sqrt{\frac 2 \pi}  \frac{ \cos(s-\frac{d-1}{4} \pi)}{s^{(d-1)/2} }+ O( s^{-(d+1)/2}).\]
From the definition of $g_\delta$, $|\cT_0 g_\delta(x)|$ is bounded below by \[\left(\frac2\pi\right)^{1/2} |B_{d+2}(|x|)| \bigg( \int_{\delta^{-1/4}}^{\delta^{-1/2}} \frac{|x|^2}{|x|^2-s^2} 
\cos^2(x-\tfrac{d-1}{4}\pi) \frac{ds}{s} - C \int_{\delta^{-1/4}}^{\delta^{-1/2}} s^{-2} ds\bigg) \]
 and thus  $|\cT_0 g_\delta(x)|\gc |B_{d+2}(x)|\log \tfrac 1\delta$ for $x\in \fA(\delta)$. By the asymptotics for $B_{d+2}(r)$ we have
\[B_{d+2} (r) = \sqrt{\frac 2 \pi}  \frac{ \cos(r-\frac{d+1}{4} \pi)}{s^{(d+1)/2} }+ O( s^{-(d+3)/2})\] 
which  leads to 
\Be \label{eq:T0low} \Big(\int_{\fA(\delta)} | \cT_0 g_\delta  (x)|^{p_d} dx\Big)^{\frac1{p_d}} \ge c (\log \tfrac 1\delta)^{1+\frac 1{p_d} } .
\Ee 

For $\cT_1 g_\delta(x)$ we derive  instead an upper bound. Using $B_d(r)=O(r^{-\frac{d-1}{2}})$, $B_{d+2}(s) =O(|s|^{-\frac{d+1}{2}})$ we get 
\[ |\cT_1 g_\delta(x)| \lc 
\delta^{-1/2} |x|^{-\frac{d+3}2} 
\]  and a straightforward calculation yields 
\Be \label{eq:T1upp}\Big(\int_{\fA_\delta} | \cT_1g_\delta (x)|^{p_d} dx\Big)^{\frac1{p_d}} \le C, \Ee uniformly in $\delta$. Thus  for small $\delta$, 
\eqref{lowerbddisc} follows from \eqref{eq:T0low} and \eqref{eq:T1upp}. 
\end{proof}

\subsection*{Lower bounds for the mollified ball multipliers}
For the lower bound $\approx \log\tfrac 1\delta$ we may assume, by continuity considerations,  that $\delta>0$ is small. We do not need a strong assumption on $\eta$ here and just  assume that $\eta\in L^1$ supported on $[-\frac 12,\frac 12]$. 
By \eqref{eq:fdelta} the lower bound follows from
$\|\F^{-1}[h_\delta f] \|_{p_d}\gc (\log \tfrac 1\delta)^{1+1/p_d} $ and by Lemma \ref{lem:lower-disc} and the triangle inequality this is a consequence of 
\begin{lem}\label{lem:upperbound-approximation}
Assume $d\ge 2$, $p_d=\frac{2d}{d+1}$. 
Then there exists $c>0$  and $\delta_\circ>0$ such that for all  $0<\delta\le 1/2 $
\Be\label{upperbddiffdisc}
\Big(\int_{\fA(\delta)} \big|\cF^{-1} [(h_\delta-\bbone_B) \widehat f_\delta](x)\big|^{p_d} dx\Big)^{1/p_d}\lc \log \tfrac 1\delta
\Ee
    \end{lem}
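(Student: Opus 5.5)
We write $h_\delta-\bbone_B$ as an average of dilates of the ball multiplier and reuse the explicit kernel \eqref{eq:Kexplicit}. Since $\om_\delta=\bbone_{[-1,1]}*\eta_\delta$ and $\int\eta=1$, we have, for $\rho\ge 0$ and $\delta$ small,
\[
\om_\delta(\rho)-\bbone_{[0,1]}(\rho)=\int \big(\bbone_{[0,1+t]}(\rho)-\bbone_{[0,1]}(\rho)\big)\eta_\delta(t)\,dt ,\qquad |t|\le \delta/2 .
\]
By \eqref{eq:Fourier-Bessel-kernel}, the kernel of $h_\delta-\bbone_B$ on radial functions is therefore
\[
\int \big(\cK_{\om_0}^{(1+t)}(r,s)-\cK_{\om_0}(r,s)\big)\eta_\delta(t)\,dt .
\]
Here $\cK^{(\lambda)}_{\om_0}(r,s)=\lambda^d\cK_{\om_0}(\lambda r,\lambda s)$ is the kernel of the ball of radius $\lambda$, obtained by scaling. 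Plugging in \eqref{eq:Kexplicit}, the term of $\cK^{(\lambda)}_{\om_0}$ relevant for $x\in\fA(\delta)$, where $r\ge 2s$, is
\[
\lambda^d\,\frac{r^2}{r^2-s^2}\,B_{d+2}(\lambda r)\,B_d(\lambda s).
\]
The companion term with $\frac{s^2}{r^2-s^2}$ is estimated exactly as $\cT_1$ was. The plan is to bound the analogue of $\cT_0$ for the difference directly, without looking for cancellation.

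\textbf{Step 1.} For $x\in\fA(\delta)$, with $r=|x|$, split the analogue of $\cT_0$ for the difference into two pieces. Fix $t$.
\begin{itemize}
\item The first piece is
\[
\Big(\lambda^d B_{d+2}(\lambda r)-B_{d+2}(r)\Big)\int \frac{r^2}{r^2-s^2}\,B_d(\lambda s)\,g_\delta(s)\,s^{d-1}\,ds .
\]
\item The second piece is
\[
B_{d+2}(r)\int \frac{r^2}{r^2-s^2}\,\big(B_d(\lambda s)-B_d(s)\big)\,g_\delta(s)\,s^{d-1}\,ds .
\]
\end{itemize}

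\textbf{Step 2: the second piece.} For $s\le\delta^{-1/2}$ and $|\lambda-1|\le\delta$, the mean value theorem gives
\[
|B_d(\lambda s)-B_d(s)|\lesssim \delta s\cdot s^{-\frac{d-1}2}.
\]
Combined with $|g_\delta(s)|\le s^{-\frac{d+1}2}$, the $s$-integral is $\lesssim\delta\int^{\delta^{-1/2}}ds\lesssim \delta^{1/2}$. Its contribution is thus $\lesssim \delta^{1/2}|B_{d+2}(r)|$, which is harmless after taking the $L^{p_d}(\fA(\delta))$ norm, as in the estimate \eqref{eq:T0low}.

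\textbf{Step 3: the first piece.} Here we only use the crude bound that the $s$-integral is $O(\log\tfrac1\delta)$, since the integrand is $O(1/s)$ on $[\delta^{-1/4},\delta^{-1/2}]$. The factor $\lambda^dB_{d+2}(\lambda r)-B_{d+2}(r)$ is $O(r^{-\frac{d+1}2})$, and for $r\lesssim\delta^{-1}$ no better bound is available, since $\lambda r-r$ can be of order one. So this piece is at most
\[
\log\tfrac1\delta\cdot \big\|\,|x|^{-\frac{d+1}2}\big\|_{L^{p_d}(\fA(\delta))}.
\]
With the exponent $p_d=\frac{2d}{d+1}$ as stated in the lemma, $|x|^{-\frac{d+1}2 p_d}=|x|^{-d}$. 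Integrating over $\fA(\delta)$ gives $\|\cdot\|_{p_d}\approx(\log\tfrac1\delta)^{1/p_d}$, and hence a total of $(\log\tfrac1\delta)^{1+1/p_d}$. This is too big by exactly the factor we want to save, so this step is the main obstacle.

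\textbf{Refining Step 3.} The remedy is to use the oscillation in $t$ together with the averaging against $\eta_\delta$. Write
\[
B_{d+2}(\lambda r)\approx c\,r^{-\frac{d+1}2}\cos\!\big(r+tr-\tfrac{d+1}4\pi\big)+O(r^{-\frac{d+3}2}).
\]
Then average
\[
\int\big(\cos(r+tr-\phi)-\cos(r-\phi)\big)\eta_\delta(t)\,dt .
\]
For $r\le\delta^{-1}$ this equals $\mathrm{Re}\big(e^{i(r-\phi)}(\widehat{\eta}(-\delta r)-1)\big)$ (with the sign of the frequency matching the paper's Fourier convention), which is $O(\min\{1,\delta r\})$ once $\eta$ has a finite first moment. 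For a merely $L^1$ function $\eta$ we only get the modulus-of-continuity bound $|\widehat\eta(\delta r)-1|=o(1)$ as $\delta r\to0$, and this is where the argument is delicate.

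\textbf{Conclusion.} Using $\min\{1,\delta r\}$, the $L^{p_d}$ norm of $r^{-\frac{d+1}2}\min\{1,\delta r\}$ over $\fA(\delta)$ is $O(1)$: the weight $(\delta r)^{p_d}$ makes the logarithmic integral converge. Multiplying by the $O(\log\tfrac1\delta)$ bound for the $s$-integral (whose $t$-dependence through $B_d(\lambda s)$ is handled by Step 2) gives \eqref{upperbddiffdisc}. The $\frac{s^2}{r^2-s^2}$ terms are $O(1)$ as in \eqref{eq:T1upp}.

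If only $\eta\in L^1$ is available, I would instead split $\fA(\delta)$ at $|x|=\epsilon\delta^{-1}$. On the outer part, the crude bound from Step 3 gives $\log\tfrac1\delta\cdot(\log\tfrac1\epsilon)^{1/p_d}$, which is acceptable for fixed $\epsilon$. On the inner part, $\sup_{|u|\le\epsilon}|\widehat\eta(u)-1|$ is small, and that is enough for a bound of the form $\lesssim\log\tfrac1\delta$.
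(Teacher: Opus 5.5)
Your main argument is correct. Once the detour through $\widehat\eta$ is removed, it is essentially the paper's proof organized term by term.

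The paper also starts from the dilation formula \eqref{scaleddisc}, but applies the fundamental theorem of calculus in the dilation parameter to the whole kernel \eqref{eq:Kexplicit}. The $v$-derivative of $(1+v)^d\cK_{\om_0}(r(1+v),s(1+v))$ is $\lc r^{-\frac{d-1}2}s^{-\frac{d-1}2}$ for $r\ge 2s$, $|v|\le\delta$. The dominant contribution is exactly your $rB_{d+2}'(r)B_d(s)$; the other terms are smaller by factors $s/r$ or $1/r$. This gives the pointwise bound $\lc\delta\log\frac1\delta\,|x|^{-\frac{d-1}2}$, and $\big\|\,|x|^{-\frac{d-1}2}\big\|_{L^{p_d}(\fA(\delta))}\approx\delta^{-1}$ finishes the proof. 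Your splitting shows a bit more: the $s$-variation costs only $O(\delta^{1/2}|B_{d+2}(r)|)$ and the companion terms are $O(1)$. The paper's single crude bound already suffices, though.

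You should correct the diagnosis in Step 3. The claim that nothing better than $O(r^{-\frac{d+1}2})$ is available is false. The quantity $\lambda r-r=tr$, with $|t|\le\delta/2$, is of order one only when $r\approx\delta^{-1}$. The mean value theorem gives, pointwise in $t$ and on $\fA(\delta)$,
$|\lambda^dB_{d+2}(\lambda r)-B_{d+2}(r)|\lc |t|\,r\cdot r^{-\frac{d+1}2}\lc r^{-\frac{d+1}2}\min\{1,\delta r\}$.
This is precisely the bound you later extract from $\widehat\eta(-\delta r)-1$. That Fourier bound is the same Lipschitz estimate in disguise, since $|e^{i\delta r\tau}-1|\le\delta r|\tau|$. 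So no oscillation in $t$ is actually used, no Bessel asymptotics are needed, and you need not decouple the $t$-dependence of the $s$-integral.

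For the same reason your worry about ``merely $L^1$'' $\eta$ is moot. The hypothesis is $\eta\in L^1$ supported in $[-\frac12,\frac12]$, so $|\widehat\eta(\xi)-1|\le\frac{|\xi|}2\|\eta\|_1$ automatically.

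Drop the $\epsilon$-splitting fallback, because it does not work as written. On $2\delta^{-1/2}\le|x|\le\epsilon\delta^{-1}$, the small factor $\sup_{|u|\le\epsilon}|\widehat\eta(u)-1|$ still multiplies $(\log\frac1\delta)^{1+1/p_d}$. That product is not $O(\log\frac1\delta)$ for fixed $\epsilon$; you would need a Dini-type condition on the modulus of continuity of $\widehat\eta$ at $0$.
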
 
\begin{proof} 
We use \eqref{eq:Fourier-BesseL} for $h_\delta=\om_\delta(|\cdot|)$ with \[\om_\delta (\rho)=\int \bbone_{[-1,1]}(\rho-u) \eta_\delta (u) du.\]  Note that 
\[ \bbone_{[-1,1]} (\rho-u)= \bbone_{[0, 1+u]}(\rho) \text{ if } \rho>0, |u|\le 1\] and we compute
\begin{align*}
    \cK_{\om_\delta}(r,s)&= \int \eta_\delta(u) \int_0^{1+u}  B_d(r\rho) B_d(s\rho) \rho^{d-1} \,d\rho\, du
    \\&=
    \int \eta_\delta(u) \int_0^{1}  B_d(r(1+u)\varrho) B_d(s(1+u)\varrho) (1+u)^d \varrho^{d-1} \,d\varrho\, du, 
\end{align*}
and thus
\Be\label{scaleddisc} \cK_{\om_\delta}(r,s) = \int \eta_\delta(u) (1+u)^d \cK_{\om_0}(r(1+u), s(1+u) ) \,du .\Ee

Since $\int \eta_\delta(u) du=1$ we can write 
\begin{align*}
    &\F^{-1} [(h_\delta-\bbone_B)\widehat {f_\delta} ](x)
    \\&= \int \eta_\delta(u) \int \Big( \cK_{\om_\delta }(|x|,s) -\cK_{\om_0} (|x|,s) \Big) g_\delta(s) s^{d-1} ds\, du 
    \\&= \int u \eta_\delta(u)\int \frac{d}{dv} \Big[ (1+v)^d \cK_{\om_0}(|x|(1+v), s(1+v))\Big]_{v=1+\sigma u} d\sigma  \,g_\delta(s) s^{d-1} ds\, du 
\end{align*}
where we have used \eqref{scaleddisc} and the fundamental theorem of calculus. 
Using $\partial_1$, $\partial_2$ as notation for derivatives with respect to the first and second entry of $\cK_{h_0}(r,s)$ we have 
\begin{multline*}
   E(v, r, s):=  \frac{d}{dv} \Big[ (1+v)^d \cK_{h_0} (r(1+v), s(1+v))\Big]
    \\=
    d(1+v)^{d-1} \cK_{h_0} + r\partial_1 \cK_{h_0} + s\partial_2\cK_{h_0} \Big|_{(r(1+v), s(1+v))} .
\end{multline*}
We use \eqref{eq:Kexplicit} to compute these expressions and then use \[ |B_{d+2}(r)|+|B_{d+2}'(r) |\lc r^{-\frac{d+1}{2}},\quad 
|B_{d}(r)|+|B_{d}'(r) |\lc r^{-\frac{d-1}{2}},\]  and similar estimates for the same expressions evaluated at $s$. The outcome is that
\[ |E(v,r,s)|\lc r^{-\frac{d-1}{2}} s^{-\frac{d-1}{2} }  \text{ provided that  $2s\le r$, $|v|\le \delta$.}
 \]
From  $|g_\delta(s) s^{d-1} | \le s^{\frac{d-3}{2}}$ we get  
\begin{multline*}\big|\F^{-1} [(h_\delta-\bbone_B) \widehat {f_\delta} ](x)\big|
\\\lc \int_{|u|\le \delta} |u||\eta_\delta(u)| 
|x|^{-\frac{d-1}{2} }\int_{\delta^{-1/4}}^{\delta^{-1/2} }\frac{ds}{s} du
\lc |x|^{-\frac{d-1}2}  \delta \log \tfrac 1\delta .\end{multline*}
Since $(\int_{\fA(\delta)} |x|^{- \frac{d-1} {2} p_d} dx)^{1/p_d}= O(\delta^{-1})$ we obtain \eqref{upperbddiffdisc}.
\end{proof}

\section{Upper bounds}\label{sec:upperbounds} 
\subsection{The applications of Theorem \ref{thm:Tjthm}} \label{sec:appl}  We  show how Theorem \ref{thm:Tjthm} implies the asserted upper bounds in $M_4$  for the mollified disc multiplier and the two kinds of Bochner--Riesz multipliers. 

We first consider $h_\delta(\xi)= \om_\delta(|\xi|)$ with $\omega_\delta= \bbone_{[-1,1]}*\delta^{-1}\eta(\delta^{-1}\cdot)$, $\supp(\eta)\subset [-\frac 12, \frac 12]$ and $\delta<1/4$. 
We show the upper bound $\|h_\delta\|_{M_4} \lc \log\delta^{-1} $ under the less restrictive assumption  $\eta\in L^r$ for $r>4/3$.
 For the application of Theorem \ref{thm:Tjthm} it is convenient to scale and consider the multiplier 
$\widetilde h_\delta(\xi) = \om_\delta( (1+2\delta) |\xi|) $ which is supported in $\{\xi: |\xi|\le \frac{1+\delta}{1+2\delta}\}$ and has the same $M_4$ norm. 
We can write 
\Be\label{eq:hdeltadec}\cF^{-1} [\widetilde h_\delta \widehat f ] = P_0 f+\sum_{j\ge 1} T_j f \Ee
where $P_0$ is the operator of convolution with a Schwartz function
and
\[ \widehat {T_j f} = \chi_1(2^j(1-|\xi|) )\widetilde h_\delta(\xi) \widehat f(\xi)\]
with $\chi_1\in C^\infty_c$, $\supp(\chi_1)\in [\tfrac 18, \tfrac 12]$.
Now 
\Be\label{eq:th-dec}\notag\begin{aligned}  \chi_1(2^j (1-|\xi|) \widetilde h_\delta(\xi) 
&= \chi_1(2^j (1-|\xi|) ) \int_{-1}^{1} \delta^{-1} \eta(\delta^{-1}
((1+2\delta)|\xi|-u)) du\\ 
&=\om_j(2^j(1-|\xi|)) \end{aligned}\Ee 
where (with the change of variable $\sigma=2^j(1-\rho))$
\Be\label{eq:th-dec2}\om_j(\sigma)=  \chi_1(\sigma)\int_{-1}^1 \delta^{-1} \eta\big(\delta^{-1} (1+2\delta)(1-2^{-j}\sigma)-\delta^{-1}u\big)du. 
\Ee
Note that the integral in \eqref{eq:th-dec2} is equal to $0$ for 
$\frac{1+\delta}{1+2\delta} \le 1-2^{-j-1}$, i.e. for $2^j>\delta^{-1}$.  The integral is  equal to $1$ when
$1-2^{-j}\sigma<\frac{1-\delta}{1+2\delta}$. Thus $\om_j(2^j(1-|\xi|)= \chi_1(2^j(1-|\xi|)$ if $1- 2^{-j-3}\le \frac{1-\delta}{1+2\delta} $
which holds when  $ 2^j\le (24\delta)^{-1}$. It remains to determine the regularity of $\om_j$ for $(24\delta)^{-1}\le 2^j \le \delta^{-1}$. Note that the integral in \eqref{eq:th-dec2} is bounded and we get for the weak derivative 
\[ \frac{d}{d\sigma} \omega_j (\sigma)= \chi_j(\sigma) \eta ( (1+2\delta) \tfrac{1-2^{-j}\sigma}{\delta} -\tfrac 1\delta) \tfrac{1+2\delta}{2^j\delta} + \chi_j'(\sigma) e_j(\sigma) \]
 where $e_j$ is uniformly bounded. Since now $2^j\delta\approx  1$ we have  $\|\om_j\|_{L^r_1}\lc \|\eta\|_r$. By the Sobolev embedding theorem, $L^r_1\hookrightarrow L^4_a$ for $a=1-\frac 1r+\frac 14$ and since $r>4/3$ we see that $a>1/2$. 
Consequently  we may apply  Theorem \ref{thm:Tjthm} to the sum $\sum_{j=1}^N T_j$ with $n=1$, $N=1+\floor{ \log\tfrac 2\delta}$ and obtain  the upper bounds in Theorem \ref{thm:main}

For the Bochner--Riesz multipliers we make different choices of $\chi_j \in \cU$ and write \[\cF^{-1}[m_\alpha \widehat f] = \widetilde P_0 f+\sum_{k=0}^\infty 2^{-k} \sum_{\frac k\alpha < j \le \frac{(k+1)}\alpha} T_j f\] where we now apply 
Theorem \ref{thm:Tjthm} with  
$N\sim \floor{\alpha^{-1}}$, $n=\floor{ k\alpha^{-1}}$  and then sum in $k$, obtaining $ \|m_\alpha\|_{M_4} \lesssim \sum_{k=0}^\infty 2^{-k} (\frac{k+1}{\alpha})^{1/4}(\frac{1}{\alpha})^{3/4} \sim \alpha^{-1}$. All other estimates in part (i) of Corollary \ref{cor:interpol} follow by interpolation with $L^2$ estimates and duality. 

 Finally, for the logarithmic  Bochner--Riesz operators in part (ii) of Corollary \ref{cor:interpol}  we decompose
\[\cF^{-1}[\ga_{\beta} \widehat f] = \widetilde P_0 f+\sum_{k=1}^\infty 2^{-k\beta } \sum_{ 2^k+1\le  j \le 2^{k+1} } T_j f\] 
for suitable choices of $\eta_j$ in a bounded subset of $C^\infty_c$ supported in $(\frac 18, 1)$. 
We then use Theorem \ref{thm:Tjthm} for the $k$-blocks $\sum_{ 2^k+1\le  j \le 2^{k+1} } T_j $,  with $n=2^k=N$,  and obtain the $L^4\to L^4$ operator norm  $O(2^k)$.
By interpolation the $L^p\to L^p$ operator  norm of these $k$-blocks is $O(2^{4k|1/p-1/2|})$ and since $\beta>4|1/p-1/2|$
we can sum in $k$.

\subsection{Proof of Theorem \ref{thm:Tjthm}} We use a variant of arguments in \cite{seeger-BRwt} (and \cite{CarberySeeger-QJ} on a related problem on weighted norm inequalities for Bochner--Riesz square functions). Observe that  $\widehat{T_j f}$ is supported in  the  annulus 
\[A_j=\{\xi: 1- 2^{-j-3}\le |\xi|\le 1-2^{-j-1}\}.\]
The key is to use the essential disjointness of the annuli $A_j$ so that some  weak orthogonality property can be exploited. The orthogonality is used to prove an inequality for vector-valued operators:

\begin{prop} \label{thm:vTjthm} Let $\cU$, $T_j$ be as in Theorem \ref{thm:Tjthm}. Then for $f\in L^4(\bbR^2)$, $N\ge 2$,
\begin{equation}\label{eq:afterH}
    \Big(\sum_{j=1}^N \|T_j f\|_4^4 \Big)^{1/4} \lc N^{1/4} \|f\|_4.
\end{equation}
\end{prop}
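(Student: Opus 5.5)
Our plan is to prove \eqref{eq:afterH} by interpolating between an $\ell^2$-valued $L^2$ estimate and an $\ell^\infty$-valued $L^\infty$-type estimate, where only the first uses the disjointness of the annuli $A_j$ and the second costs a logarithmic factor. Since the exponents $(\sum_j\|T_jf\|_4^4)^{1/4}=\|\{T_jf\}\|_{L^4(\ell^4)}$, it suffices to bound the linear operator $f\mapsto\{T_jf\}_{j=1}^N$ from $L^4$ to $L^4(\ell^4)$. We interpolate (complex or real interpolation for vector-valued operators) between two endpoints. At $L^2\to L^2(\ell^2)$, Plancherel and the essential disjointness of the supports $A_j$ (bounded overlap, since $\om_j$ is supported in $(\frac18,\frac12)$) give $\|(\sum_j|T_jf|^2)^{1/2}\|_2\lc\|f\|_2$ uniformly in $N$. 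This embeds into $L^2(\ell^4)$, but we will rather interpolate $L^2(\ell^2)$ with an endpoint $L^\infty\to L^\infty(\ell^\infty)$, since $[\ell^2,\ell^\infty]_{1/2}=\ell^4$ and $[L^2,L^\infty]_{1/2}=L^4$. Interpolation then yields the constant $1^{1/2}\cdot B_\infty^{1/2}$, where $B_\infty$ bounds $\sup_j\|T_j\|_{L^\infty\to L^\infty}$.

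The difficulty is that each $T_j$ is \emph{not} bounded on $L^\infty$ uniformly: $\|T_j\|_{L^\infty\to L^\infty}=\|K_j\|_1\approx 2^{j/2}$, as the kernel is concentrated near a circle of radius $2^j$ with width $1$. So the naive endpoint fails, and we need to refine. Here we follow \cite{seeger-BRwt}. We split each multiplier angularly into $\approx 2^{j/2}$ sectors, $T_j=\sum_\nu T_{j,\nu}$, with each $T_{j,\nu}$ having an $L^1$-normalized kernel adapted to a $2^j\times2^{j/2}$ plate. Then we would instead aim at a weighted or square-function formulation. For $g\ge0$ in $L^2$, we want
\[\int|T_jf|^2g\lc\int|f|^2\,\mathcal M_jg,\]
with $\mathcal M_j$ a Nikodym/Kakeya-type maximal operator over $2^j\times 2^{j/2}$ rectangles. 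Summing in $j$ against $\sum_j\int|T_jf|^4$, it would suffice to control $\|\sup_{j\le N}\mathcal M_j\|_{L^2\to L^2}$, or rather the sum with the correct $N$-dependence. Concretely, we write $\sum_j\|T_jf\|_4^4=\sum_j\int|T_jf|^2 g_j$ with $g_j=|T_jf|^2$. By duality this is $\le\sup\{\sum_j\int|T_jf|^2g_j:\|(\sum_j g_j^2)^{1/2}\|_2\le1\}\cdot\|\cdots\|$. Using the C\'ordoba-type estimate above, we bound it by $\int|f|^2\sum_j\mathcal M_jg_j$. The key point is then a vector-valued Kakeya maximal bound $\|\sum_{j\le N}\mathcal M_jg_j\|_2\lc N^{1/2}\|(\sum_j|g_j|^2)^{1/2}\|_2$, or more efficiently $\lc (\log)$-losses giving $N^{1/2}$ overall. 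Combining gives $\sum_j\|T_jf\|_4^4\lc N^{1/2}\|f\|_4^2\big(\sum_j\|T_jf\|_4^4\big)^{1/2}$, whence \eqref{eq:afterH}.

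The main obstacle is the final step: obtaining the maximal/orthogonality estimate with exactly $N^{1/2}$ loss (i.e.\ $N^{1/4}$ after taking roots) rather than $N$. The trivial Cauchy--Schwarz gives $\|\sum_j\mathcal M_jg_j\|_2\le N^{1/2}\max_j\|\mathcal M_j\|_{2\to2}\,\|(\sum g_j^2)^{1/2}\|_2$. Each $\mathcal M_j$ (Nikodym maximal with eccentricity $2^{j/2}$) has $L^2$ norm $\lc j\le N$ by C\'ordoba's estimate \cite{CordobaBR79}, which loses an extra power. To avoid this, we will try to use a single geometric localization: decompose $f$ and $g_j$ into pieces localized to direction sectors at the coarsest scale and use the almost-orthogonality across $j$ (disjoint $A_j$) on the $L^2$ side. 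Alternatively, we can accept $\|\mathcal M_j\|\lc j^{1/2}$-type bounds via C\'ordoba's $L^2$ Kakeya bound applied to $\sup_j$ rather than to each $j$, which costs only $\log$ of the eccentricity range, i.e.\ $N$, and combine this with Cauchy--Schwarz in $j$ more carefully. Checking which of these gives precisely $N^{1/2}$ is where we expect the real work.
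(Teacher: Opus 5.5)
\textbf{There is a genuine gap.} The step you postpone is not just unproven: in the generality you state it, it is false. The weighted inequality $\int|T_jf|^2g\lesssim\int|f|^2\mathcal M_jg$ is already a version of Stein's weighted conjecture, not something you can quote from C\'ordoba. But grant it. You then need $\|\sum_{j\le N}\mathcal M_jg_j\|_2\le B\|(\sum_jg_j^2)^{1/2}\|_2$ with $B\lesssim N^{1/2}$. For any maximal operator that dominates averages over $2^j\times2^{j/2}$ rectangles in $\approx 2^{j/2}$ directions, one has $B\gtrsim N$. To see this, tile a square $Q$ of side $2^N$ by copies of a scale-$2^j$ Besicovitch (Perron tree) configuration. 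For $N/2<j\le N$ let $g_j=\bbone_{E_j}$, with $E_j$ the union of the copies, and set $g_j=0$ otherwise. Then $|E_j|\lesssim |Q|/j$, while $\mathcal M_jg_j\gtrsim 1$ on a subset of $CQ$ of measure $\gtrsim|Q|$. Hence $\|\sum_j\mathcal M_jg_j\|_2\gtrsim N|Q|^{1/2}$, whereas $\|(\sum_jg_j^2)^{1/2}\|_2\approx(\sum_{N/2<j\le N}|Q|/j)^{1/2}\approx|Q|^{1/2}$. With $B\gtrsim N$ your scheme yields at best $N^{1/2}$. That is what one gets trivially by summing C\'ordoba's single-scale bounds $\|T_jf\|_4\lesssim j^{1/4}\|f\|_4$, and your Cauchy--Schwarz fallback with the $O(j)$ Nikodym bound gives only $N^{3/4}$. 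Maximal operators are positive, so no orthogonality in $j$ is left to exploit at that stage; the $N^{1/4}$ has to come from somewhere else.

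Your first instinct, interpolating an $L^2$ orthogonality bound with an $L^\infty$ bound, was the right one. What is missing is \emph{what} to apply it to. The paper never estimates $T_j$ on $L^\infty$. It first splits $T_j=\sum_\nu T_{j,\nu}$ into $2^{-j/2}$-sectors and uses Fefferman's biorthogonality argument, $\|T_jf\|_4^4\lesssim\sum_{\nu,\nu'}\|T_{j,\nu}f\,T_{j,\nu'}f\|_2^2$. It then sorts the pairs by separation $|\nu-\nu'|\approx 2^m$ and refines each $T_{j,\nu}$ into pieces $T^m_{j,\nu,\mu}$ of angular width $2^{-j/2-m}$. A second application of the Fefferman argument, followed by Cauchy--Schwarz, gives
\[\|T_jf\|_4^4\lesssim\sum_{m}\sum_{\kappa}\Big\|\Big(\sum_{\mu\in\mathfrak V^\kappa_{j,m}}\sum_\nu|T^m_{j,\nu,\mu}f|^2\Big)^{1/2}\Big\|_4^4 .\]
For fixed $(j,m,\kappa)$ the pieces live in equally spaced, essentially parallel $2^{-j}\times 2^{-j/2-m}$ rectangles. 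For \emph{fixed} $m$, the whole family indexed by $(j,\kappa)$ is then interpolated exactly as you first proposed, between two bounds:
\begin{itemize}
\item an $L^2\to\ell^2_{j,\kappa}L^2(\ell^2_{\mu,\nu})$ bound, from the bounded overlap of all these rectangles across $j,\kappa,\mu,\nu$;
\item an $L^\infty\to\ell^\infty_{j,\kappa}L^\infty(\ell^2_{\mu,\nu})$ bound of Carleson--Rubio de Francia type for equally spaced square functions, obtained after an anisotropic rescaling. This is where $\omega_j\in L^2_{1+\varepsilon}$ is used; bilinear complex interpolation in $(f,\{\omega_j\})$ turns it into the $L^4_a$, $a>1/2$, hypothesis.
\end{itemize}
This gives, for each $m$, a bound uniform in $N$. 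The factor $N^{1/4}$ is simply the fourth root of the number $O(N)$ of separation scales $m$; no Kakeya or Nikodym maximal function enters.
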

Theorem \ref{thm:Tjthm} follows by crudely applying H\"older's inequality for the $j$-sum and interchanging   sum. This gives 
\begin{align}\notag
    \Big\|\sum_{n< j\le n+N} T_j f\Big \|_{4}  &\lc  N^{3/4}  
    \Big(\sum_{n<j\le n+N} \|T_j f\|_4^4 \Big)^{1/4}
    \\ \label{eq:applv} &\lc N^{3/4} (N+n)^{1/4} \|f\|_4
\end{align}
where in \eqref{eq:applv} we have applied Proposition \ref{thm:vTjthm} with $N$ replaced with  $N+n$. 

\subsubsection{\it Proof of Proposition \ref{thm:vTjthm} }
Let $\chi_\circ$ be a $C^\infty$ function supported in a ball of radius $\frac 1{10}$ centered at $(0,1)$. 
By a localization and rotation argument we may replace the operators $T_j$ in the formulation of the theorem with $\cT_j$ defined by
\begin{equation}\label{eq:bilinearfirst}
\cT_j f (x)= \cB_j [\om_j,f](x)  
\text{ where }  
\widehat{\cB_j[\om,f] } (\xi) =
\chi_\circ (\xi)
\om(2^j(1-|\xi|) ) \widehat f(\xi),    
\end{equation}  
with $\|\om_j\|_{L_{\eps+1/2}^4} \lc 1$. Below it will be useful 
 to view $\cT_j$ as an bilinear operator acting on $\omega_j$ and $f$ as in \eqref{eq:bilinearfirst}.

Observe that  $\widehat{\cT_j f}$ is supported in the intersection of a narrow sector with the  annulus 
$A_j$.

Let $\vartheta\in C^\infty_c(\bbR)$ satisfy  $\sum_{\nu} \vartheta(s-\nu)=1$ and $\supp(\vartheta)\subset (-1,1)$ 
and define \[ \widehat{T_{j,\nu} f} = \vartheta(2^{j/2} \xi_1 -\nu) \widehat {\cT_j f}. \] 
By the Fefferman argument in \cite{FeffermanBR73} (using  the essential disjointness of the sets $\supp \,\widehat {T_{j,\nu} f} +\supp \, \widehat {T_{j,\nu'} f} $ due to the curvature of the circle),
\begin{equation} \label{eq:Feff}
\|\cT_j f\|_4^4= \Big\|\sum_{\nu,\nu'} T_{j,\nu} f\, T_{j,\nu'} f \Big\|_2^2 \lc \sum_{\nu,\nu'} \big\|T_{j,\nu} f\, T_{j,\nu '}f\big\|_2^2.
\end{equation} 
Let $\Gamma_j^0=\{ (\nu,\nu)\}$ and $\Gamma_j^m = \{(\nu,\nu'): 2^{m-1} \le |\nu-\nu'| < 2^{m}\}$, where in this definition  $1\le 2^{m-1} \le 2^{\frac j2+C} $. 
If $R_{j,\nu}$ is the support of the multiplier for $T_{j,\nu}$ the distance of $R_{j,\nu}$ and $R_{j,\nu'}$ is $\approx 2^{m-\frac j2}$ provided that $m>C$. 

For $(\nu,\nu')\in \Gamma_j^m$ we decompose
\[ T_{j,\nu} f= \sum_\mu T^m_{j,\nu,\mu} f\] where 
\[ \widehat{ T^m_{j,\nu,\mu}f}(\xi) = \vartheta(2^{\frac j2+m} \xi_1-\mu) \widehat {T_{j,\nu} f} (\xi).\]
We  argue  as in \cite{seeger-BRwt} and note that for fixed $m$ the sets
\[\supp \,\widehat {T_{j,\nu,\mu} f} +\supp\, \widehat {T_{j,\nu',\mu'} f} \quad \text{  with $(\nu,\nu') \in \Gamma^m_j$ } \] are essentially disjoint. Therefore, again by the Fefferman argument, 
\[ 
\sum_{\nu,\nu'} \big\|T_{j,\nu} f\, T_{j,\nu '}f\big \|_2^2\lc  \sum_{0\le m\le \frac j2+C} \sum_{(\nu,\nu') \in \Gamma_j^m } \sum_{\mu,\mu'}
\big \|T^m_{j,\nu,\mu} f\, T^m_{j,\nu ',\mu'}f\big \|_2^2.
\]

Now define, for  $\kappa\in [-C2^{\frac j2-m}, C2^{\frac j2-m}]\cap\bbZ $, 
\[\fV^\ka_{j,m}  =\big\{\mu\in \bbZ: |2^{-\frac j2-m} \mu- 2^{m-\frac j2}\ka|\le 2^{m-\frac j2+1} \big\} .\]
Note that for fixed $j,m$ every $\mu$ is contained in $O(1) $ of the sets $\fV^\ka_{j,m}$ (and contained in at least one of those). 
Moreover, for fixed $j,m,\mu$,  $T^j_{\nu,\mu}$ is nonzero only for $O(1)$ of the $\nu$. 

We can thus replace the previous bound by 
\begin{align*} 
    \sum_{\nu,\nu'} &\big\|T_{j,\nu} f\, T_{j,\nu'}f\big\|_2^2   \\  &\lc  \sum_{i\le C} \sum_{0\le m\le \frac j2+C} \sum_{-C2^{m-\frac j2}\le \ka \le C 2^{m-\frac j2}}\\ 
 &\qquad\qquad \Big\| \Big(\sum_{\mu\in \fV^\ka_{j,m} }\sum_\nu \big| T^m_{j,\nu,\mu} f \big|^2\Big)^{1/2} \Big(\sum_{\mu'\in \fV^{\ka+i} _{j,m} } \sum_{\nu'} \big| T^m_{j,\nu',\mu'} f |^2\Big)^{1/2} \Big\|_2^2
\\&\lc 
    \sum_{0\le m\le \frac j2+C} \sum_{-C'2^{m-\frac j2}\le \ka\le  C' 2^{m-\frac j2}}
    \Big\| \Big(\sum_{\mu\in \fV^\ka_{j,m} } \sum_\nu \big| T^m_{j,\nu,\mu} f\big |^2\Big)^{1/2} \Big\|_4^4 \,.
\end{align*}
Putting this into \eqref{eq:Feff} 
we get 
\begin{multline*}
   \Big(\sum_{j\le N} \|\cT_j f\|_4^4 \Big)^{1/4}\,\lc
\\ \bigg(\sum_{0\le m\le C+\frac N2} 
   \sum_{2m-C\le j\le N }  \sum_{-C'2^{m-\frac j2}\le \ka\le  C' 2^{m-\frac j2}}
     \Big\| \Big(\sum_{\mu\in \fV^\ka_{j,m} } \big| T^m_{j,\nu,\mu} f\big |^2\Big)^{1/2} \Big\|_4^4\bigg)^{1/4} \,.
\end{multline*} 
We will pick up the term $N^{1/4}$ from the $m$-sum, and so, in order to get \eqref{eq:impr} it remains  to prove, for fixed $m\le C+N/2$, 
 \begin{equation}\label{eq:m-ind}
   \bigg(\sum_{\substack{j: \\ 2m-C\le j \le N} }  \sum_{|\ka|\le C'2^{-m+\frac j2}}
     \Big\| \Big(\sum_{\mu\in \fV^\ka_{j,m} } \sum_\nu \big| T^m_{j,\nu,\mu} f\big |^2\Big)^{1/2} \Big\|_4^4\bigg)^{1/4} \lc \|f\|_4\,,
 \end{equation}
 with the  implicit constant independent of $m$.

 Inequality \eqref{eq:m-ind}
 follows by an interpolation argument. As indicated before it is now useful to consider the operators 
 $T^m_{j,\nu,\mu}$ act as  bilinear operators  $\cB^m_{j,\mu,\nu} $ acting on $f$ and the $\om_j$.

Let  $\Omega= \{\Omega_j\}_{j=1}^N $ in $\ell^\infty(L^r_a))$  where  $L^r_a$ is the usual Sobolev (or Bessel potential) space.
Let 
$\chi_1\in C^\infty$,  $\chi_1=1$ on $(\frac 18,\frac 12)$ and $\supp(\chi_1)\subset(\frac 1{16},\frac 34)$, and define 
$\cB^m_{j,\mu,\nu} (f,\Om)$ by 
\begin{align*}  \widehat{\cB^m_{j,\mu,\nu} (f,\Om)} &
=  h_{m,j,\mu, \nu}[\Om] \widehat f \text{ where }   h_{m,j,\mu, \nu}[\Om](\xi) =\\& 
 \chi_\circ(\xi)\vartheta(2^{\frac j2}\xi_1-\nu)\vartheta(2^{\frac j2+m} \xi_1-\mu) \chi_1(2^j(1-|\xi|)) \Om_j(2^j(1-|\xi|)).
 \end{align*}  
Taking $\Omega_j$ such that $\Om_j=\om_j$ 
 we see that $T^m_{j,\nu,\mu} f =\cB^m_{j,\mu,\nu} (f,\Om)$. Inequality 
\eqref{eq:m-ind} is then the special case with $q=4$ of 
 \begin{multline}\label{eq:m-ind=q} 
\bigg(\sum_{2m-C\le j\le N }  \sum_{|\ka|\le  C' 2^{-m+\frac j2}}
     \Big\| \Big(\sum_{\mu\in \fV^\ka_{j,m} } \sum_\nu \big| \cB^m_{j,\nu,\mu} (f,\Om)\big |^2\Big)^{1/2} \Big\|_q^q\bigg)^{1/q} 
    \\ \lc  \sup_j\|\Om_j\|_{L^r_a}\|f\|_q, \quad 2< q\le \infty, \,\, a>1-\tfrac 2q, \, r= \tfrac{2q}{q-2}.
     \end{multline} 
     This inequality expresses  the $\ell^\infty(L^r_a) \times L^q \to \ell^q(L^q(\ell^2))$ boundedness of a bilinear operator which can be interpolated using the complex interpolation method.  Here one combines standard results  for interpolation of bilinear operators   \cite[\S4.4]{bergh-lofstrom} and 
     interpolation of Sobolev spaces (\cite[\S6]{bergh-lofstrom}). Here for the limiting result $q=2$ we use $L^\infty$ in place of $L^r_a$. We use the interpolation in conjunction with  a Sobolev embedding theorem. Observe that for  $2<q<\infty$, $\theta=1-2/q$ and $a>1-2/q$ the complex interpolation space $[L^\infty, L^2_{1+\eps}]_\theta$ contains $L^r_a$ with $r=\frac{2q}{q-2}$ provided that $\eps>0$ is such that $(1+\eps)(1-2/q)<a$.

For $q=2$ we aim for the  $\ell^\infty(L^\infty) \times L^2 \to \ell^2(L^2(\ell^2))$
 boundedness.
 We thus prove the
 inequality  
  \begin{multline}\label{eq:m-indL2}
   \bigg(\sum_{2m-C\le j\le N} 
   \sum_{|\ka|\le  C' 2^{-m+\frac j2}}
     \Big\| \Big(\sum_{\mu\in \fV^\ka_{j,m} } \sum_\nu \big| \cB^m_{j,\nu,\mu} (f,\Om)\big |^2\Big)^{1/2} \Big\|_2^2\bigg)^{1/2} 
    \\ \lc  \sup_j\|\Om_j\|_{L^\infty} \|f\|_2.
    \end{multline}
 This holds by  almost-orthogonality. Indeed, for fixed $m$,  the non-empty supports $\cR^m_{j,\nu, \mu}$ are contained in $A_j$ and have all finite overlap, i.e.
 \[ \sup_\xi \sum_{j,\ka,\mu,\nu} \bbone_{\cR^m_{j,\nu,\mu} } (\xi)<\infty \] 
 which implies \eqref{eq:m-indL2}.

 For $q=\infty$ we prove $\ell^\infty(L^2_{1+\eps}) \times L^\infty \to \ell^\infty(L^\infty(\ell^2))$ boundedness; i.e. we show the  inequality
 \begin{multline}\label{eq:m-indLinfty}
   \sup_{2m-C\le j\le  N}   \sup_{| \ka|\le  C' 2^{m-\frac j2}}
     \Big\| \Big(\sum_{\mu\in \fV^\ka_{j,m} } \sum_\nu \big| \cB^m_{j,\nu,\mu} (f,\Om)\big |^2\Big)^{1/2} \Big\|_\infty \\ \lc \sup_j \|\Omega_j\|_{L^2_{1+\eps} (\bbR) }\|f\|_\infty.
 \end{multline}
For fixed  $j$, $m$, $\kappa$  this becomes is  essentially an $L^\infty$ estimate for equally-spaced square functions. This type  of  $L^\infty$ bound  was first noticed by Carleson (in unpublished work), 
 see \cite{CordobaBR79}) and   \cite{RubiodeFrancia1983}. For a version close to what we need here see   \cite{seeger-BRwt}.
 
For fixed $j$, $m$, $\kappa$ the multipliers $h_{m,j,\mu,\nu}$  with $\mu \in \fV^\ka_{j,m}$ are supported in 
essentially disjoint rectangles of sidelength $2^{-j/2-m}$ and $2^{-j}$, with the longer side perpendicular to $\fn_{j,\nu,\mu}$. The angle between $\fn_{j,\nu,\mu}$ and $\fn_{j,\mu',\nu'}$ for $\mu, \mu'\in \fV^\ka_{j,m} $ is $O(2^{-\frac j2+m})$ which is comparable to the ratio of the length of the shorter side to the length of the longer side. This means that for fixed $j,m,\ka$ these rectangles have  essentially the same orientation. 
Each is contained in a rectangle with short side  of length $C 2^{-j}$ parallel to a unit vector $\fn_\ka$ and a long side of length $C 2^{-m-j/2}$ parallel to the orthogonal unit vector $\fn_\ka^\perp$. Moreover $\fn_\ka$ is normal to the circle in the angular region associated with $\fV^\ka_{j,m}$. 

Now let $\La_\ka\equiv \La_{\ka,m,j} :\bbR^2\to \bbR^2$  be the linear transformation with $\La_\ka \fn_\ka= 2^j \fn_\ka$ and $\La_\ka \fn_\ka^\perp = 2^{m+\frac j2}\fn_\ka^\perp$. Let, for $(\mu,\nu)\in \fV^\ka_{j,m}$,
\[
a_{m,j,\mu,\nu}[\om]  (\xi) = h_{m,j,\mu,\nu} [\om] (L_\ka^{-1} \xi).  \]
The multipliers $a_{m,j,\mu,\nu} [\om]$
live  in balls of bounded diameter and,  given fixed $m,\ka$ each of those balls nontrivially  intersects the sets 
$\supp(a_{m,j,\mu,\nu})$ only for a bounded number  of pairs  $(\mu,\nu)$. 
Also,  
\Be \label{eq:uniformSobolev} \sup_{(\mu,\nu)\in  \fV^\ka_{j,m}}\big\|  a_{m,j,\mu,\nu}[\om] \big \|_{L^2_s(\bbR^2)} \lc \Big (\sum_{\mu,\nu} |c_{\mu,\nu}|^2\Big)^{1/2} \sup_{\mu,\nu} \|\om\|_{L^2_s(\bbR)}, 
\Ee 
for $s=0,1,2,\dots,$ a straightforward consequence of the chain rule. 

Now to prove \eqref{eq:m-indLinfty},
we use duality in $\ell^2$ to see that for each $j,m,x$  it suffices to show that for $\eps>0$
\Be \label{eq:fixedxest}
\Big| \sum_{(\mu,\nu)\in \fV^\ka_{j,m}} c_{\mu,\nu} \cF^{-1}[a_{m,j,\mu,\nu} ] * f(x) \Big| \lc \|\{c_{\mu,\nu} \}\|_{\ell^2(\bbZ)} \|\om\|_{L^2_{1+\eps} } \|f\|_\infty.
\Ee
As in the standard proof for Bernstein's inequality 
the left hand side of \eqref{eq:fixedxest} is dominated by 
\begin{align*} 
&\Big( \int\Big|\sum_{(\mu,\nu)\in \fV^\ka_{j,m}} c_{\mu,\nu}  \cF^{-1}[a_{m,j,\mu,\nu}[\om] ] (y) \Big|^2 (1+|y|)^{2+2\eps} dy\Big)^{\frac 12} \Big(\int \frac{|f(x-y)|^2} {(1+ |y|)^{2+2\eps}} dy \Big)^{\frac 12}
\\&\lc_\eps \Big\| \sum_{(\mu,\nu)\in \fV^\ka_{j,m}} c_{\mu,\nu} a_{m,j,\mu,\nu}[\om] \Big \|_{L^2_{1+\eps}(\bbR^2)}  \|f\|_\infty
\end{align*}
and the proof of \eqref{eq:fixedxest} is concluded after noting that 
\[\Big\| \sum_{(\mu,\nu)\in \fV^\ka_{j,m}} c_{\mu,\nu} a_{m,j,\mu,\nu}[\om] \Big \|_{L^2_s(\bbR^2)} \lc_s \Big (\sum_{\mu,\nu} |c_{\mu,\nu}|^2\Big)^{1/2} \sup_{\mu,\nu} \|\om\|_{L^2_s}.
\]
This inequality in turn is verified for $s=0,1,2,\dots$  by using \eqref{eq:uniformSobolev} and the described disjointness of supports. It then follows  for all $s\ge 0$ by interpolation. This finishes the proof of \eqref{eq:fixedxest} and hence \eqref{eq:m-indLinfty}.
\qed

\section{Concluding remarks}

\subsection{Other  regularizations} 
In the definition of $h_\delta$ it was convenient to   regularize the characteristic function of an interval and then to use that in the radial variable. Alternatively,  we can also choose $\widetilde h_\delta (\xi)= \bbone_D*\upsilon_\delta$ where $\upsilon_\delta(\xi)=\delta^{-2}\upsilon (\delta^{-1}\xi)$ and $\upsilon$ is a radial $C^\infty_c$ function with $\int \upsilon (\xi) d\xi=1$. Theorem \ref{thm:main} remains valid with $\widetilde h_\delta$ in place of $h_\delta$ since $\|\widetilde h_\delta-h_\delta\|_{M_4} \lc (\log\tfrac 1\delta)^{1/4}$ for small $\delta$; this can be seen by applying either C\'ordoba's argument in \cite{CordobaBR79} or Theorem  \ref{thm:Tjthm} with $N=1$. We remark  that while C\'ordoba relies {explicitly} on an $L^2$ estimate for a Nikodym maximal function, this is avoided here (indeed already avoided in \cite{seeger-BRwt}). However it was noted in \cite{CarberyRLP} that  the Fefferman argument from \cite{FeffermanBR73}, as applied in \cite{seeger-BRwt}, \cite{CarberySeeger-QJ} and here, already implies the $L^2$ estimate for the Nikodym maximal function. 

\subsection{Remarks on higher dimensions}  In this paper we have focused  on the upper bounds {entirely} in the two-dimensional situation. 
In the complement of the  range  $p\in [\frac{2d}{d+1}, \frac{2d}{d-1}]$ only  partial results for Bochner--Riesz  and mollified  disc multipliers are  known for $d\ge 3$, and much research is currently being  done in this area.  
We refer to \cite{Stein-Williamstown}, \cite{Tao-Duke99, Tao-Indiana1998} for a discussion of  the Bochner--Riesz and related conjectures.  We 
will not review the long list of important  partial  results here and refer for the most recent progress on the Bochner--Riesz problem  to  \cite{GuoOhWangWuZhang}, in conjunction with  recent  work  on the Fourier restriction conjecture \cite{WangWu24}, and the references in those papers.  Before the  resolution of the  (non-endpoint)
Bochner--Riesz conjecture sharp results in the range $\frac{2d}{d+1}\le p\le \frac{2d}{d-1}$, analogous to Theorem \ref{thm:main}, seem currently   completely out of reach in dimension $d\ge 3$.

 \bibliographystyle{plain}

\end{document}